\theoremstyle{plain}
\newtheorem{theorem}{Theorem}[section]
\newtheorem{proposition}[theorem]{Proposition}
\newtheorem{lemma}[theorem]{Lemma}
\newtheorem{remark}[theorem]{Remark}
\numberwithin{equation}{section}
\begin{document}

\title{\it {\Large Stability analysis of an overdetermined fourth order boundary value problem via an integral identity}}
\author{Yuya Okamoto \and Michiaki Onodera}
\date{}
\maketitle

\begin{abstract}
We consider an overdetermined fourth order boundary value problem in which the boundary value of the Laplacian of the solution is prescribed, in addition to the homogeneous Dirichlet boundary condition. 
It is known that, in the case where the prescribed boundary value is a constant, this overdetermined problem has a solution if and only if the domain under consideration is a ball. 
In this paper, we study the shape of a domain admitting a solution to the overdetermined problem when the prescribed boundary value is slightly perturbed from a constant. 
We derive an integral identity for the fourth order Dirichlet problem and a nonlinear weighted trace inequality, and the combination of them results in a quantitative stability estimate which measures the deviation of a domain from a ball in terms of the perturbation of the boundary value. 
\end{abstract}

\section{Introduction}
\label{section-introduction}

We consider the fourth order Dirichlet problem
\begin{equation}
\label{dirichlet}
\left\{
\begin{aligned}
 \Delta^2 u&=1 && \textrm{in} \ \Omega,\\
 u=\frac{\partial u}{\partial \nu}&=0 && \textrm{on} \ \partial\Omega
\end{aligned}
\right.
\end{equation}
with the additional overdetermined boundary condition
\begin{equation}
\label{od}
 \Delta u=f \quad \textrm{on} \ \partial\Omega,
\end{equation}
where $\Omega\subset\mathbb{R}^n$ ($n\geq 2$) is a bounded domain, $\nu$ is the unit outer normal vector to $\partial\Omega$ and $f$ is a prescribed positive function defined in $\mathbb{R}^n$. 
Equation \eqref{dirichlet} models the bending of a horizontally clamped plate being pushed upward and properties of the solution $u$ are extensively studied in Gazzola, Grunau and Sweers \cite{gazzola_grunau_sweers-2010}. 
Our interest lies in the shape of $\Omega$ for which a unique solution $u$ to \eqref{dirichlet} additionally satisfies \eqref{od}. 


The corresponding second order overdetermined problem is
\begin{equation}
\label{torsion}
 \quad \left\{
 \begin{aligned}
  -\Delta \psi&=1 && \text{in} \ \Omega,\\
  \psi&=0 && \text{on} \ \partial\Omega
 \end{aligned}
 \right.
\end{equation}
with the additional boundary condition
\begin{equation}
\label{od-2}
 -\frac{\partial\psi}{\partial\nu}=|\nabla \psi|=f \quad \text{on} \ \partial\Omega. 
\end{equation}
A celebrated result of Serrin \cite{serrin-1971} states that, in the case where $f$ is a constant, \eqref{torsion} together with \eqref{od-2} has a solution $\psi$ if and only if $\Omega$ is a ball. 
In the poof he introduced the so-called method of moving planes, based on Alexandrov's reflection principle and an extension of Hopf's lemma applicable to boundary points at corners, and proved the symmetry result for more general second order overdetermined problems. 
Weinberger \cite{weinberger-1971} provided a simple alternative proof based on the observation that, for a solution $\psi$ to \eqref{torsion}, the nonnegative function called the Cachy-Schwarz deficit
\begin{equation*}
 \delta(\psi):=|D^2\psi|^2-\frac{(\Delta \psi)^2}{n}=\sum_{i,j=1}^n(\partial_{ij}\psi)^2-\frac1n\left(\sum_{k=1}^n\partial_{kk}\psi\right)^2
\end{equation*}
vanishes everywhere if and only if $\psi$ is a quadratic polynomial of the form
\begin{equation}
\label{quadratic_polynomial}
 Q(x)=\frac{R^2-|x-z|^2}{2n} \quad (R>0, \ z\in\mathbb{R}^n), 
\end{equation}
which is an explicit solution to \eqref{torsion} and \eqref{od-2} when $\Omega$ is a ball and $f$ is a constant. 
This fact suggests that $\delta(\psi)$ measures the deviation of $\psi$ from $Q$, or that of $\Omega$ from a ball. 
This observation was further extended by Magnanini and Poggesi \cite{magnanini_poggesi-2019,magnanini_poggesi-2020_indiana,magnanini_poggesi-2020}, and they showed that the integral identity
\begin{equation}
\label{integral_identity_2nd}
 \int_\Omega \psi\left\{\left|D^2 \psi\right|^2-\frac{(\Delta \psi)^2}{n}\right\}\,dx=\frac{1}{2}\int_{\partial\Omega}\left(c^2-|\nabla\psi|^2\right)\left\{\frac{\partial\psi}{\partial\nu}+\frac{(x-z)\cdot\nu}{n}\right\}\,dS
\end{equation}
holds for any solution $\psi$ to \eqref{torsion}, $z\in\Omega$ and $c\in\mathbb{R}$. 
This identity directly implies the radial symmetry of $\Omega$ when \eqref{od-2} is satisfied for a constant $f=c$, since the right hand side of the identity then becomes $0$ and thus $\delta(\psi)=0$. 
Moreover, by estimating the both sides of the identity by elaborate inequalities for harmonic functions, in particular for $\psi-Q$, they obtained the stability estimate
\begin{equation}
\label{sharp_estimate}
 \rho_2-\rho_1\leq C\left\||\nabla\psi|-c\right\|_{L^2(\partial\Omega)}^{\tau_n}
\end{equation}
for the shape of an unknown domain $\Omega$ in terms of the radii $0<\rho_1\leq \rho_2<\infty$ of the largest ball contained in $\Omega$ and the smallest ball containing $\Omega$ with common center $z\in\mathbb{R}^n$, i.e., $B_{\rho_1}(z)\subset\Omega\subset B_{\rho_2}(z)$, where $\tau_2=1$, $\tau_3$ is arbitrarily close to $1$, and $\tau_n=2/(n-1)$ for $n\geq 4$. 
This shows quantitatively how $\Omega$ is close to a ball in the Hausdorff distance when the additional condition \eqref{od-2} is satisfied for $f$ close to a constant in the $L^2$-norm. 

These types of stability estimates for the second order problem \eqref{torsion} and \eqref{od-2} had been obtained also by means of a quantitative version of the method of moving planes, initiated by Aftalion, Busca and Reichel \cite{aftalion_busca_reichel-1999}, and developed by Ciraolo, Magnanini and Vespri \cite{ciraolo_magnanini_vespri-2016} for some $0<\tau_n<1$ with the $L^2$-norm replaced by the Lipschitz seminorm. 
In fact, these results also hold for semilinear equations $-\Delta u=f(u)$ with $u>0$. 
On the other hand, Brandolini, Nitsch, Salani and Trombetti \cite{brandolini_nitsch_salani_trombetti-2008_2} made use of an integral quantity related to Newton's inequality involving elementary symmetric functions of the eigenvalues of $D^2 \psi$, and proved the same stability result for some $0<\tau_n<1$ with the $L^\infty$-norm in the right hand side. 
Moreover, this argument was used to yield an estimate of the volume of the symmetric difference of $\Omega$ and a union of balls by the $L^1$-norm of $|\nabla\psi|-c$. 
Developing their idea, Feldman \cite{feldman-2018} proved the linear stability estimate (i.e., $\tau_n=1$) for the volume $|\Omega\triangle B|$ of the symmetric difference of $\Omega$ and a ball $B$ by the $L^2$-norm of $|\nabla\psi|-c$. 
Recently, Gilsbach and Onodera \cite{gilsbach_onodera-2021} installed a new implicit function theorem and applied it to establish a linear optimal stability estimate with H\"older norms in both sides of the inequality. 

For higher order overdetermined problems, the radial symmetry of $\Omega$ for constant boundary values were studied by Bennett \cite{bennett-1986}, Payne and Schaefer \cite{payne_schaefer-1989,payne_schaefer-1994}, Dalmasso \cite{dalmasso-1990}, Philippin and Ragoub \cite{philippin_ragoub-1995}, Barkatou \cite{barkatou-2008} and Colasuonno and Vecchi \cite{colasuonno_vecchi-2019}. 
In particular, for the fourth order overdetermined problem \eqref{dirichlet} and \eqref{od} with $f=c$, Bennett \cite{bennett-1986} proved the radial symmetry of $\Omega$ by extending Weinberger's argument and deriving $\delta(v)=0$, where $v:=-\Delta u$ is a solution to
\begin{equation*}
 \left\{
 \begin{aligned}
  -\Delta v&= 1 && \text{in} \ \Omega,\\
  v&=-f && \text{on} \ \partial\Omega. 
 \end{aligned}
 \right.
\end{equation*}
Note that, when $\Omega$ is a ball, the overdetermined problem \eqref{dirichlet} with \eqref{od} has a radially symmetric solution $u$ given by a quartic polynomial (see \eqref{explicit_radial_solution}), and $v$ is a quadratic polynomial of the form \eqref{quadratic_polynomial}. 
Thus $\delta(v)$ is expected to measure the deviation of $\Omega$ from a ball. 
It is our attempt to derive an integral identity, analogous to \eqref{integral_identity_2nd}, involving $\delta(v)$ and the deviation $\Delta u-c$ of the additional boundary value in \eqref{od} from a constant. 
The following integral identity is one of the key ingredients of our stability analysis. 

\begin{theorem}
\label{theorem-identity}
Let $\Omega\subset\mathbb{R}^n$ be a bounded domain having $C^1$-boundary $\partial\Omega$ and $u\in C^{4}(\overline{\Omega})$ a solution to \eqref{dirichlet}, and set $v:=-\Delta u$. 
Then, 
\begin{equation}
\label{integral_identity_4th}
 \int_\Omega u\left\{|D^2 v|^2-\frac{(\Delta v)^2}{n}\right\}\,dx=\frac14\int_{\partial\Omega}\left\{c^2-(\Delta u)^2\right\}\left\{\frac{\partial v}{\partial \nu}+\frac{(x-z)\cdot\nu}{n}\right\}\,dS
\end{equation}
holds for any $z\in\Omega$ and $c\in\mathbb{R}$. 
\end{theorem}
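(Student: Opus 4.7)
The plan is to exploit the fact that, since $v:=-\Delta u$ satisfies $\Delta v=-1$, the Cauchy--Schwarz deficit of $v$ is a pure Laplacian up to the factor $\tfrac{1}{2}$. Setting
\begin{equation*}
 P:=|\nabla v|^{2}+\tfrac{2}{n}v,
\end{equation*}
the Bochner identity $\Delta|\nabla v|^{2}=2|D^{2}v|^{2}+2\nabla v\cdot\nabla\Delta v$ reduces to $\Delta|\nabla v|^{2}=2|D^{2}v|^{2}$ because $\Delta v$ is constant, and combined with $\Delta(\tfrac{2}{n}v)=-\tfrac{2}{n}$ this yields $\Delta P=2(|D^{2}v|^{2}-(\Delta v)^{2}/n)$. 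Applying Green's second identity to $\int_{\Omega}u\,\Delta P\,dx$, both boundary contributions $u\,\partial_{\nu}P$ and $P\,\partial_{\nu}u$ vanish by the clamped conditions $u=\partial_{\nu}u=0$ on $\partial\Omega$, so the left-hand side of \eqref{integral_identity_4th} equals $-\tfrac{1}{2}\int_{\Omega}vP\,dx$.

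Next I expand $\int_{\Omega}vP\,dx=\int_{\Omega}v|\nabla v|^{2}\,dx+\tfrac{2}{n}\int_{\Omega}v^{2}\,dx$ and use $\mathrm{div}(v^{2}\nabla v)=2v|\nabla v|^{2}-v^{2}$ to convert $\int_{\Omega}v|\nabla v|^{2}\,dx=\tfrac{1}{2}\int_{\partial\Omega}v^{2}\,\partial_{\nu}v\,dS+\tfrac{1}{2}\int_{\Omega}v^{2}\,dx$, obtaining
\begin{equation*}
 \int_{\Omega}u\bigl(|D^{2}v|^{2}-\tfrac{(\Delta v)^{2}}{n}\bigr)dx=-\frac{1}{4}\int_{\partial\Omega}v^{2}\,\partial_{\nu}v\,dS-\frac{n+4}{4n}\int_{\Omega}v^{2}\,dx.
\end{equation*}
On the right-hand side of \eqref{integral_identity_4th}, the relation $(\Delta u)^{2}=v^{2}$ on $\partial\Omega$, combined with $\int_{\partial\Omega}\partial_{\nu}v\,dS=\int_{\Omega}\Delta v\,dx=-|\Omega|$ and $\int_{\partial\Omega}(x-z)\cdot\nu\,dS=n|\Omega|$, shows that the $c^{2}$-contribution drops out, and the asserted identity reduces to the single Rellich--Pohozaev type relation
\begin{equation*}
 \int_{\partial\Omega}v^{2}\,(x-z)\cdot\nu\,dS=(n+4)\int_{\Omega}v^{2}\,dx.
\end{equation*}

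The main obstacle is this remaining identity, where the biharmonic structure enters in an essential way. I would multiply $\Delta^{2}u=1$ by $F:=(x-z)\cdot\nabla u$ and apply Green's identity twice. A direct computation gives $\Delta F=2\Delta u+(x-z)\cdot\nabla\Delta u$, while the clamped conditions force $F\equiv 0$ on $\partial\Omega$ and the pointwise decomposition $D^{2}u|_{\partial\Omega}=(\Delta u)\,\nu\otimes\nu=-v\,\nu\otimes\nu$ (the tangential components of $D^{2}u$ being killed by the vanishing of $u$ and $\nabla u$ along the boundary), whence $\partial_{\nu}F=-v(x-z)\cdot\nu$ on $\partial\Omega$. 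The boundary integral from the fourth-order Green's identity then evaluates to $-\int_{\partial\Omega}v^{2}(x-z)\cdot\nu\,dS$. Combining this with the auxiliary identity $\int_{\Omega}v^{2}\,dx=\int_{\Omega}u\,dx$ (which follows from $\int_{\Omega}(\Delta u)^{2}\,dx=\int_{\Omega}u\,\Delta^{2}u\,dx$, the boundary terms again vanishing) and with the elementary Pohozaev relation $\int_{\partial\Omega}v^{2}(x-z)\cdot\nu\,dS=2\int_{\Omega}v(x-z)\cdot\nabla v\,dx+n\int_{\Omega}v^{2}\,dx$ coming from $\mathrm{div}(v^{2}(x-z))$, one eliminates $\int_{\Omega}v(x-z)\cdot\nabla v\,dx$ and recovers $(n+4)\int_{\Omega}v^{2}\,dx=\int_{\partial\Omega}v^{2}(x-z)\cdot\nu\,dS$. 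The delicate step is the careful evaluation of the fourth-order Pohozaev boundary term via the Hessian decomposition $D^{2}u|_{\partial\Omega}=-v\,\nu\otimes\nu$; this is precisely where the two-fold Dirichlet condition on $u$ is used crucially, and it completes the chain of equalities needed for \eqref{integral_identity_4th}.
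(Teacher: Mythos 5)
Your proof is correct and arrives at the identity \eqref{integral_identity_4th}, but by a mildly different route from the paper's, and the comparison is instructive. In the first half you introduce $P:=|\nabla v|^{2}+\tfrac{2}{n}v$ and use Bochner together with $\Delta v\equiv -1$ to get $\Delta P=2\bigl(|D^{2}v|^{2}-\tfrac{(\Delta v)^{2}}{n}\bigr)$, and then apply the ordinary second Green identity $\int_\Omega(u\Delta P-P\Delta u)\,dx=\int_{\partial\Omega}(u\,\partial_\nu P-P\,\partial_\nu u)\,dS$, which is especially clean because both boundary terms carry the vanishing factors $u$ or $\partial_\nu u$. The paper instead introduces the primitive $q:=\tfrac{v^{2}}{4}-\tfrac{n+2}{2n}u$ (so $2\Delta q=P$ and $\Delta^2 q$ is the deficit) and applies the \emph{biharmonic} Green identity; that picks up two nonvanishing boundary terms $\Delta u\,\partial_\nu q-\partial_\nu\Delta u\,q$, which are then evaluated directly on $\partial\Omega$. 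The two derivations are dual: you integrate one Laplacian onto $u$ earlier, and spend the ``saved'' integration by parts on converting $\int_\Omega v|\nabla v|^{2}\,dx$ into a boundary term via $\mathrm{div}(v^{2}\nabla v)$, so the net effort is comparable. In the second half you identify the key Rellich--Pohozaev identity $(n+4)\int_\Omega v^{2}\,dx=\int_{\partial\Omega}v^{2}(x-z)\cdot\nu\,dS$ exactly where the paper invokes its Pucci--Serrin Lemma (Lemma~\ref{lemma-variational_identity}, which is the same statement once one uses $\int_\Omega v^{2}=\int_\Omega u$); your route integrates by parts once, produces a surplus term $\int_\Omega v\,(x-z)\cdot\nabla v\,dx$, and removes it with the divergence of $v^{2}(x-z)$, whereas the paper computes $\Delta^{2}\bigl((x-z)\cdot\nabla u\bigr)=4$ and applies the full biharmonic Green identity in a single stroke, which is slightly tidier. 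Both are valid; nothing in your argument is incorrect, only the intermediate sign conventions (e.g.\ the $\pm\int_{\partial\Omega}v^{2}(x-z)\cdot\nu\,dS$ in the Pohozaev boundary term) depend on whether one integrates by parts once or twice, and should be stated consistently when writing the argument out in full.
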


An immediate consequence of this identity is the radial symmetry of $\Omega$ when \eqref{dirichlet} and \eqref{od} with $f=c$ have a solution $u$, if additionally $\Omega$ is assumed to be $\varepsilon_0$-close to the unit ball $\mathbb{B}$ in the $C^{4+\alpha}$-sense ($0<\alpha<1$), i.e., there is a diffeomorphism $\Phi\in C^{4+\alpha}(\overline{\mathbb{B}},\overline{\Omega})$ with $\|\Phi-{\rm Id}\|_{C^{4+\alpha}(\overline{\mathbb{B}})}<\varepsilon_0$ for some particular constant $\varepsilon_0=\varepsilon_0(n)>0$ depending only on the dimension $n$. 
This additional requirement is due to the fact that the positivity preserving property, i.e., $\Delta^2 u\geq0$ implying $u\geq0$ in $\Omega$, for the fourth order Dirichlet problem \eqref{dirichlet} is no longer true for general bounded domains $\Omega$ including even mildly eccentric ellipses in $\mathbb{R}^2$. 

Stability estimates as \eqref{sharp_estimate} for higher order overdetermined problems have not been studied in the literature. 
This is particularly due to the failure of maximum principles for higher order equations. 
Recently we learned that Gilsbach and Stollenwerk \cite{gilsbach_stollenwerk-2021} obtained a stability estimate with H\"older norms by the implicit function theorem introduced in \cite{gilsbach_onodera-2021}. 
Our main result in this paper is the following quantitative stability estimate of the deviation of $\Omega$ from $\mathbb{B}$ in the Hausdorff distance by the $L^p$-norm of a perturbation of $f$ from a constant in \eqref{od}, under the $\varepsilon_0$-closeness of $\Omega$ to $\mathbb{B}$ in the $C^{4+\alpha}$-sense, where
\begin{equation*}
 \sigma_p:=\left\{
 \begin{aligned}
  & \frac{(n+2)p}{n(n+2p-1)} && (1\leq p<3/2),\\
  & \hspace{10mm} \frac{3}{2n} && (3/2\leq p\leq\infty). 
 \end{aligned}
 \right.
\end{equation*}

\begin{theorem}
\label{theorem-stability}
For $1\leq p\leq\infty$ and $0\leq\sigma<\sigma_p$, there are $\varepsilon_0>0$ and $C>0$ such that, if $\Omega$ is $\varepsilon_0$-close to $\mathbb{B}$ in the $C^{4+\alpha}$-sense and $u\in C^{4+\alpha}(\overline{\Omega})$ is a solution to \eqref{dirichlet}, then there are $z\in\Omega$ and $0<\rho_1\leq\rho_2<\infty$ such that $B_{\rho_1}(z)\subset\Omega\subset B_{\rho_2}(z)$ and
\begin{equation}
\label{stability_estimate}
 \rho_2-\rho_1\leq C\left(\left\|\Delta u-c\right\|_{L^p(\partial\Omega)}^{\sigma}+\left\|\Delta u-c\right\|_{L^\infty(\partial\Omega)}\right). 
\end{equation}
In the case where $n\geq 3$ and $p=1$, we can choose $\sigma=\sigma_1$, i.e., 
\begin{equation*}
 \rho_2-\rho_1\leq C\left(\left\|\Delta u-c\right\|_{L^1(\partial\Omega)}^\frac{n+2}{n(n+1)}+\left\|\Delta u-c\right\|_{L^\infty(\partial\Omega)}\right). 
\end{equation*}
\end{theorem}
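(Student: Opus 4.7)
The plan is to reduce the integral identity of Theorem~\ref{theorem-identity} to an inequality for a single harmonic ``defect'' function, and then to balance a nonlinear weighted trace inequality against a H\"older bound on the boundary to extract the advertised exponents $\sigma_p$.

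First I would introduce the model $\tilde Q(x):=-c-(|x-z|^2-R^2)/(2n)$, where $z\in\Omega$ and $R>0$ are tuning parameters, so that $-\Delta\tilde Q=1$ and $\tilde Q\equiv -c$ on the sphere $\{|x-z|=R\}$. Then $w:=v-\tilde Q$ is harmonic in $\Omega$, and using $\Delta w\equiv 0$ and $D^2\tilde Q=-I/n$ a direct computation yields the pointwise identities
\[|D^2 v|^2-\frac{(\Delta v)^2}{n}=|D^2 w|^2,\qquad \frac{\partial v}{\partial\nu}+\frac{(x-z)\cdot\nu}{n}=\frac{\partial w}{\partial\nu},\]
so Theorem~\ref{theorem-identity} simplifies to
\[E:=\int_\Omega u\,|D^2 w|^2\,dx=-\frac14\int_{\partial\Omega}(f-c)(f+c)\,\frac{\partial w}{\partial\nu}\,dS.\]
Since $f+c$ is uniformly bounded under $C^{4+\alpha}$-closeness to $\mathbb{B}$, H\"older's inequality gives $E\le C\,\|\Delta u-c\|_{L^p(\partial\Omega)}\,\|\partial w/\partial\nu\|_{L^{p'}(\partial\Omega)}$.

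The core analytic step is a nonlinear weighted trace inequality that controls $\|\partial w/\partial\nu\|_{L^{p'}(\partial\Omega)}$ by a fractional power of $E$, up to an $L^\infty$-size correction coming from $w|_{\partial\Omega}=(c-f)+(|x-z|^2-R^2)/(2n)$. The heuristic is that the $C^{4+\alpha}$-closeness forces $u\sim d^2$ in $\Omega$ where $d$ is the distance to $\partial\Omega$, so $E$ controls $\int_\Omega d^2|D^2 w|^2\,dx$; a Hardy-type inequality then upgrades this into a boundary control of $|\nabla w|$. Interpolating between the $p'=\infty$ endpoint (interior $C^{k,\alpha}$-estimates for the harmonic function $w$) and a natural Hilbertian endpoint produces the $L^{p'}$-bound; the explicit formula for $\sigma_p$ and the plateau $3/(2n)$ for $p\ge 3/2$ reflect the critical Sobolev relation between the quadratic weight and the boundary trace, with the plateau appearing because in the large-$p$ regime the bottleneck becomes $p$-independent.

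Finally I would convert the control of $E$ into a Hausdorff estimate. Choosing $(z,R)$ so as to annihilate appropriate low-order moments of $|x-z|^2-R^2$ on $\partial\Omega$, the trace formula
\[w|_{\partial\Omega}=(c-f)+\frac{|x-z|^2-R^2}{2n}\]
encodes the signed radial deviation $|x-z|-R$ modulo an $L^\infty$-size perturbation by $c-f$. A Poincar\'e-type estimate for the harmonic $w$, combined with elementary spectral bounds on the near-spherical $\partial\Omega$, yields $\rho_2-\rho_1$ in terms of a norm of $w|_{\partial\Omega}$, and hence in terms of $E^{1/2}$ plus lower-order contributions. Substituting the bound on $E$ obtained above and optimising the interpolation parameter produces \eqref{stability_estimate} for every $\sigma<\sigma_p$; the improvement to $\sigma=\sigma_1$ when $n\ge 3$, $p=1$ corresponds to the relevant Sobolev embedding being attained rather than borderline in that regime.

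The hardest step is the sharp nonlinear weighted trace inequality. Standard trace theorems supply $\|\nabla w\|_{L^{p'}(\partial\Omega)}$ only in terms of unweighted $H^2$- or Lipschitz norms of $w$, which do not become small under the perturbation; extracting the small weighted quantity $E$ requires the simultaneous use of Hardy's inequality, interior harmonic estimates, and a careful interpolation, while keeping all implicit constants stable across the $C^{4+\alpha}$-neighbourhood of $\mathbb{B}$. A secondary technicality is the moment-fixing procedure that selects $(z,R)$ continuously in the perturbation.
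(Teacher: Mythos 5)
Your overall strategy mirrors the paper's: rewrite the integral identity in terms of a harmonic defect function (your $w$ equals the paper's $h=v-Q$ up to an additive constant $c$, so $\nabla w=\nabla h$ and $D^2w=D^2h$), bound the boundary integral by H\"older, prove a nonlinear weighted trace inequality $\|\nabla h\|_{L^p(\partial\Omega)}\le C\bigl[\int_\Omega d_{\partial\Omega}^2|D^2h|^2\,dx\bigr]^{\beta}$, and bootstrap. That part is essentially right, and the heuristics about the $p$-plateau for $\beta_p$ dualizing to the plateau of $\sigma_p$ are on track.

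The genuine gap is in the final conversion from $E:=\int_\Omega u|D^2h|^2\,dx$ to the Hausdorff distance. You claim that ``a Poincar\'e-type estimate \dots combined with elementary spectral bounds'' gives $\rho_2-\rho_1$ in terms of $E^{1/2}$. That is not correct: $\rho_2-\rho_1$ is a sup-type quantity, and the step from an $L^2$- (or $L^{2^\ast}$-) average of $h$ to its oscillation on $\partial\Omega$ is necessarily \emph{sublinear}. The paper uses the Magnanini--Poggesi estimate
\[
\max_{\partial\Omega}h-\min_{\partial\Omega}h\ \le\ C\,\|h-h_\Omega\|_{L^{2^\ast}(\Omega)}^{2^\ast/(n+2^\ast)},
\]
which together with Poincar\'e--Sobolev and Hurri-Syrj\"anen's Hardy-type inequality produces $\rho_2-\rho_1\lesssim E^{2^\ast/(2(n+2^\ast))}+\|\Delta u-c\|_{L^\infty(\partial\Omega)}$; for $n\ge3$ this exponent equals $1/n$, not $1/2$. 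Substituting $E\lesssim\|\Delta u-c\|_{L^{p'}}^{1/(1-\beta)}$ into the exponent $1/n$ is exactly what yields $\sigma=1/(n(1-\beta_p))$; if one instead used $E^{1/2}$, the resulting exponent $1/(2(1-\beta_p))$ would not match $\sigma_p$ (for instance it would give $3/4$ instead of $3/(2n)$ on the plateau). In short, without invoking the nonlinear oscillation inequality (or an equivalent of it) the exponents do not close, and ``Poincar\'e plus spectral bounds'' cannot supply that sublinearity.

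Two smaller points. First, no moment-fixing of $(z,R)$ is needed: the paper takes $z$ as the maximum point of $v$ (so $\nabla h(z)=0$, guaranteeing smallness of $\nabla h$ via Lemma~\ref{lemma-perturbation}), and $\rho_1,\rho_2$ are simply the inradius/outradius from that $z$. Second, the additive $\|\Delta u-c\|_{L^\infty(\partial\Omega)}$ term in \eqref{stability_estimate} enters already when relating $\rho_2^2-\rho_1^2$ to $\max_{\partial\Omega}h-\min_{\partial\Omega}h$, not inside the trace inequality as you describe; the trace inequality is a clean bound without a lower-order correction.
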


\begin{remark}
{\rm 
The $\varepsilon_0$-closeness is used essentially only for the positivity preserving property for the fourth order Dirichlet problem \eqref{dirichlet} (see Proposition \ref{proposition-distance}). 
Although this assumption will be used in several other arguments such as the uniform Schauder estimates (Lemmas \ref{lemma-schauder} and \ref{lemma-perturbation}) and weighted inequalities in Section \ref{section-weighted_estimates}, these estimates hold for more general domains (see e.g.\ the proof of \cite[Lemma 2.7]{magnanini_poggesi-2020}) under some mild geometric conditions. 
}
\end{remark}

The integral-identity strategy used in this paper has also been performed in several related problems. 
For recent advances, see Dipierro, Poggesi and Valdinoci \cite{dipierro_poggesi_valdinoci-2021}, Fogagnolo and Pinamonti \cite{fogagnolo_pinamonti-2021}, and Scheuer \cite{scheuer-2021}. 
In fact, Magnanini and Poggesi \cite{magnanini_poggesi-2019} initiated this approach first for the stability analysis of Alexandrov's soap bubble theorem, which has a link with the second order problem \eqref{torsion} and \eqref{od-2} via integral identities. 
It would be very interesting to find such a relation between a forth order problem and its counterpart in differential geometry. 

This paper is organized as follows. 
In Section \ref{section-integral_identity}, we prove the integral identity \eqref{integral_identity_4th} for the fourth order Dirichlet problem \eqref{dirichlet} by virtue of a higher order analogue of Pohozaev's identity known as the Pucci-Serrin identity. 
The proof of Theorem \ref{theorem-stability} is based on estimating both sides of \eqref{integral_identity_4th} by several weighted inequalities for the harmonic function $h=v-Q$ and its gradients. 
For this strategy, in Section \ref{section-uniform_estimates}, we prove several uniform estimates including a pointwise estimate showing that $u$ behaves like the square of the distance $d_{\partial\Omega}$ from $\partial\Omega$ near $\partial\Omega$. 
In Section \ref{section-weighted_estimates}, we derive a new nonlinear weighted trace inequality of the form
\begin{equation*}
 \left\|\nabla h\right\|_{L^p(\partial\Omega)}\leq C\left\|d_{\partial\Omega}D^2h\right\|_{L^2(\Omega)}^{2\beta}
\end{equation*}
for some $0<2\beta<1$ specified explicitly in Lemma \ref{lemma-nonlinear_trace_inequality}. 
This together with several known inequalities deduces the stability estimate \eqref{stability_estimate}. 

\section{Integral identity for the Dirichlet problem}
\label{section-integral_identity}

The following lemma is a special case of the Pucci-Serrin variational identity \cite{pucci_serrin-1986} for the biharmonic operator. 

\begin{lemma}
\label{lemma-variational_identity}
Let $\Omega\subset\mathbb{R}^n$ be a bounded domain having $C^1$-boundary $\partial\Omega$ and suppose that $u\in C^{4}(\overline{\Omega})$ is a solution to \eqref{dirichlet}. 
Then, for any $z\in\Omega$, 
\begin{equation*}
(n+4)\int_\Omega u\,dx=\int_{\partial\Omega}(\Delta u)^2(x-z)\cdot\nu\,dS. 
\end{equation*}
\end{lemma}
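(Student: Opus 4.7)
The statement is a Pohozaev/Pucci--Serrin identity, obtained by testing $\Delta^2 u=1$ against the dilation multiplier $(x-z)\cdot\nabla u$. My plan is to translate so that $z=0$ without loss of generality, compute $\int_\Omega \Delta^2 u\,(x\cdot\nabla u)\,dx$ in two ways, and use the clamped boundary conditions $u=\partial_\nu u=0$ together with an auxiliary Rellich-type identity to eliminate the bulk term involving $(\Delta u)^2$.

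The first computation is immediate: since $\Delta^2 u\equiv 1$ and $u$ vanishes on $\partial\Omega$, the divergence theorem gives
\begin{equation*}
\int_\Omega \Delta^2 u\,(x\cdot\nabla u)\,dx=\int_\Omega x\cdot\nabla u\,dx=-n\int_\Omega u\,dx.
\end{equation*}
The second computation uses Green's second identity on $\Delta(\Delta u)$:
\begin{equation*}
\int_\Omega \Delta^2 u\,(x\cdot\nabla u)\,dx=\int_\Omega \Delta u\,\Delta(x\cdot\nabla u)\,dx+\int_{\partial\Omega}\left[(x\cdot\nabla u)\partial_\nu \Delta u-\Delta u\,\partial_\nu(x\cdot\nabla u)\right]dS.
\end{equation*}
The conditions $u=\partial_\nu u=0$ on $\partial\Omega$ imply $\nabla u\equiv 0$ on $\partial\Omega$, which kills the first boundary term and, by differentiating $\partial_i u=0$ tangentially, forces $D^2 u=(\Delta u)\nu\otimes\nu$ on $\partial\Omega$. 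Hence $\partial_\nu(x\cdot\nabla u)=(\nabla u+D^2 u\cdot x)\cdot\nu=(\Delta u)(x\cdot\nu)$ on $\partial\Omega$.

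For the interior term I compute $\Delta(x\cdot\nabla u)=2\Delta u+x\cdot\nabla(\Delta u)$, and integrate by parts once more:
\begin{equation*}
\int_\Omega \Delta u\,x\cdot\nabla(\Delta u)\,dx=\tfrac{1}{2}\int_\Omega x\cdot\nabla\bigl((\Delta u)^2\bigr)\,dx=-\tfrac{n}{2}\int_\Omega(\Delta u)^2\,dx+\tfrac{1}{2}\int_{\partial\Omega}(\Delta u)^2(x\cdot\nu)\,dS.
\end{equation*}
Combining yields
\begin{equation*}
-n\int_\Omega u\,dx=\tfrac{4-n}{2}\int_\Omega(\Delta u)^2\,dx-\tfrac{1}{2}\int_{\partial\Omega}(\Delta u)^2(x\cdot\nu)\,dS.
\end{equation*}

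To eliminate the remaining bulk term I invoke the elementary identity $\int_\Omega(\Delta u)^2\,dx=\int_\Omega u\,\Delta^2 u\,dx=\int_\Omega u\,dx$, which follows from two applications of Green's identity with all boundary contributions vanishing by the clamped conditions. Substituting and rearranging produces $\int_{\partial\Omega}(\Delta u)^2(x\cdot\nu)\,dS=(n+4)\int_\Omega u\,dx$, which is the claim after translating back by $z$. The only mildly delicate step is recognizing $D^2 u=(\Delta u)\nu\otimes\nu$ on $\partial\Omega$ at the right moment; the rest is bookkeeping of the several boundary integrals, and I expect no serious obstacle once that structural fact is used.
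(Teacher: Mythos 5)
Your proof is correct and uses essentially the same approach as the paper: testing $\Delta^2 u=1$ against the Pohozaev multiplier $w=(x-z)\cdot\nabla u$ and exploiting the boundary fact $D^2 u=(\Delta u)\,\nu\otimes\nu$ on $\partial\Omega$. The only organizational difference is that the paper observes $\Delta^2 w=4$ and applies the biharmonic Green identity between $u$ and $w$ in one stroke, so the bulk term $\int_\Omega(\Delta u)^2\,dx$ never appears and the auxiliary energy identity you invoke at the end is not needed.
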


\begin{proof}
For the reader's convenience, we give a proof for this special case. 
Let us set $w:=(x-z)\cdot\nabla u$ and observe that $u=|\nabla u|=w=0$ on $\partial\Omega$ and
\begin{align*}
 \Delta^2 w&=\Delta\left\{(x-z)\cdot\nabla\Delta u+2\Delta u\right\}=(x-z)\cdot\nabla\Delta^2 u+4\Delta^2 u=4 && \text{in} \ \Omega,\\
 \frac{\partial w}{\partial\nu}&={}^t(x-z)\cdot D^2u\cdot \nu=\left((x-z)\cdot\nu\right)\frac{\partial^2 u}{\partial\nu^2}=\left((x-z)\cdot\nu\right)\Delta u && \text{on} \ \partial\Omega. 
\end{align*}
Hence, by Green's identity
\begin{equation*}
 \int_\Omega\left(u\Delta^2w-\Delta^2u w\right)\,dx
 =\int_{\partial\Omega}\left(u\frac{\partial \Delta w}{\partial\nu}-\frac{\partial u}{\partial\nu}\Delta w+\Delta u\frac{\partial w}{\partial\nu}-\frac{\partial\Delta u}{\partial\nu}w\right)\,dS, 
\end{equation*}
we obtain 
\begin{equation*}
 \int_\Omega\left\{4u-(x-z)\cdot\nabla u\right\}\,dx=\int_{\partial\Omega}(\Delta u)^2(x-z)\cdot\nu\,dS, 
\end{equation*}
where the left hand side is equal to $(n+4)\int_\Omega u\,dx$ by the divergence theorem. 
\end{proof}

\begin{proof}[Proof of Theorem \ref{theorem-identity}]
Let us introduce the auxiliary function
\begin{equation*}
 q:=\frac{v^2}{4}-\frac{n+2}{2n}u, 
\end{equation*}
which satisfies
\begin{align*}
 \Delta q&=-\frac{v}{2}+\frac{|\nabla v|^2}{2}+\frac{n+2}{2n}v=\frac{|\nabla v|^2}{2}+\frac{v}{n},\\
 \Delta^2q&=|D^2v|^2-\frac1n=|D^2v|^2-\frac{(\Delta v)^2}{n}. 
\end{align*}
Applying Green's identity and using the boundary conditions, we have
\begin{align*}
 \int_\Omega u\Delta^2q\,dx
 &=\int_\Omega \Delta^2 u q\,dx+\int_{\partial\Omega}\left(\Delta u\frac{\partial q}{\partial\nu}-\frac{\partial\Delta u}{\partial\nu} q\right)\,dS\\
 &=\int_\Omega\left(\frac{v^2}{4}-\frac{n+2}{2n}u\right)\,dx+\int_{\partial\Omega}\left(-\frac{1}{2}v^2\frac{\partial v}{\partial\nu}+\frac{1}{4}v^2\frac{\partial v}{\partial\nu}\right)\,dS\\
 &=-\frac{n+4}{4n}\int_\Omega u\,dx-\frac{1}{4}\int_{\partial\Omega}v^2\frac{\partial v}{\partial\nu}\,dS, 
\end{align*}
where the last equality follows from
\begin{equation*}
 \int_\Omega v^2\,dx=-\int_{\Omega}v\Delta u\,dx=\int_{\Omega}u\,dx. 
\end{equation*}
Hence, by Lemma \ref{lemma-variational_identity}, 
\begin{align*}
 \int_{\Omega}u\Delta^2 q\,dx&=-\frac{1}{4n}\int_{\partial\Omega}(\Delta u)^2(x-z)\cdot\nu\,dS-\frac{1}{4}\int_{\partial\Omega}(\Delta u)^2\frac{\partial v}{\partial\nu}\,dS\\
 &=-\frac{1}{4}\int_{\partial\Omega}(\Delta u)^2\left\{\frac{\partial v}{\partial\nu}+\frac{(x-z)\cdot\nu}{n}\right\}\,dS. 
\end{align*}
Finally, the divergence theorem yields
\begin{equation*}
 \int_{\partial\Omega}\left\{\frac{\partial v}{\partial\nu}+\frac{(x-z)\cdot\nu}{n}\right\}\,dS=\int_\Omega\left(\Delta v+1\right)\,dx=0, 
\end{equation*}
and the proof is completed. 
\end{proof}

\section{Uniform pointwise estimate}
\label{section-uniform_estimates}

This section concerns the uniform estimates for solutions $u$ to the Dirichlet problem \eqref{dirichlet} for any domains $\Omega$ which are $\varepsilon_0$-close to $\mathbb{B}$ in the $C^{4+\alpha}$-sense. 
In particular, the closeness of $\Omega$ to $\mathbb{B}$ is used to obtain the boundary behavior
\begin{equation*}
 u(x)\geq \eta \left(d_{\partial\Omega}(x)\right)^2 \quad (x\in\Omega)
\end{equation*}
of solutions $u$ to \eqref{dirichlet}, where $d_{\partial\Omega}(x)$ denotes the distance from $x$ to $\partial\Omega$ and $\eta>0$ is a small constant. 
The constant $\varepsilon_0>0$ is chosen such that the following positivity preserving property holds. 

\begin{lemma}[Grunau and Robert \cite{grunau_robert-2010}]
\label{lemma-positivity_preserving}
There is a small constant $\varepsilon_0>0$ such that, for any $\Omega$ that is $\varepsilon_0$-close to $\mathbb{B}$ in the $C^{4+\alpha}$-sense, any $u\in C^{4+\alpha}(\overline{\Omega})$ satisfying
\begin{equation*}
 \left\{
 \begin{aligned}
  \Delta^2 u&\geq 0 && \text{in} \ \Omega,\\
  u=\frac{\partial u}{\partial\nu}&=0 && \text{on} \ \partial\Omega
 \end{aligned}
 \right.
\end{equation*}
must be nonnegative everywhere in $\Omega$. 
\end{lemma}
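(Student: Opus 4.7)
The plan is to deduce the pointwise positivity of $u$ from positivity of the biharmonic Green's function on $\Omega$, and to establish the latter by a perturbative comparison with Boggio's explicit Green's function on the ball. Setting $f:=\Delta^2 u\ge 0$ and letting $G_\Omega$ denote the Green's function for the clamped plate problem $\Delta^2 v=g$, $v=\partial v/\partial\nu=0$ on $\partial\Omega$, the representation
\begin{equation*}
 u(x)=\int_\Omega G_\Omega(x,y)f(y)\,dy\qquad(x\in\Omega)
\end{equation*}
reduces the statement to $G_\Omega(x,y)\ge 0$ for every $x,y\in\Omega$, uniformly in all $\Omega$ that are $\varepsilon_0$-close to $\mathbb{B}$ in the $C^{4+\alpha}$-sense.

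To exploit this closeness, I would fix the underlying domain by pulling back along the diffeomorphism $\Phi\in C^{4+\alpha}(\overline{\mathbb{B}},\overline{\Omega})$ with $\|\Phi-\mathrm{Id}\|_{C^{4+\alpha}(\overline{\mathbb{B}})}<\varepsilon_0$. Conjugating $\Delta^2$ by this change of variables yields a uniformly elliptic fourth-order operator $L_\Phi$ on $\mathbb{B}$ whose coefficients converge to those of $\Delta^2$ in $C^{\alpha}(\overline{\mathbb{B}})$ as $\varepsilon_0\to 0$, and whose Dirichlet problem is equivalent to the original one. Since the Green's function $\tilde{G}_\Phi$ of $L_\Phi$ on $\mathbb{B}$ under homogeneous Dirichlet data differs from $G_\Omega$ only by a smooth positive Jacobian factor, the question becomes whether $\tilde{G}_\Phi\ge 0$ on the fixed product $\mathbb{B}\times\mathbb{B}$.

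The core of the argument is a comparison with Boggio's explicit Green's function $G_{\mathbb{B}}$ of $\Delta^2$ on $\mathbb{B}$. Boggio's formula provides sharp two-sided bounds of the form
\begin{equation*}
 G_{\mathbb{B}}(x,y)\asymp\min\!\left\{|x-y|^{4-n},\,\frac{d_{\partial\mathbb{B}}(x)^2\,d_{\partial\mathbb{B}}(y)^2}{|x-y|^{n}}\right\}
\end{equation*}
(with the standard logarithmic modifications in low dimensions), and in particular $G_{\mathbb{B}}>0$ throughout $\mathbb{B}\times\mathbb{B}$. Writing $\tilde{G}_\Phi=G_{\mathbb{B}}+R_\Phi$, the remainder solves
\begin{equation*}
 L_\Phi R_\Phi(\cdot,y)=(\Delta^2-L_\Phi)G_{\mathbb{B}}(\cdot,y)\text{ in }\mathbb{B},\qquad R_\Phi=\frac{\partial R_\Phi}{\partial\nu}=0\text{ on }\partial\mathbb{B},
\end{equation*}
and I would close a contraction-mapping argument for $R_\Phi$ in a weighted Banach space whose weight encodes the anisotropic boundary decay $d_{\partial\mathbb{B}}(x)^2 d_{\partial\mathbb{B}}(y)^2$ of $G_{\mathbb{B}}$. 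The smallness of $\|\Phi-\mathrm{Id}\|_{C^{4+\alpha}}$ then translates into smallness of $\Delta^2-L_\Phi$ in the corresponding operator norm, forcing $|R_\Phi|\le\tfrac{1}{2}\,G_{\mathbb{B}}$, and hence $\tilde{G}_\Phi\ge\tfrac{1}{2}G_{\mathbb{B}}>0$.

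The main obstacle is the design and uniform control of this weighted framework: $G_{\mathbb{B}}$ carries a non-integrable singularity on the diagonal (for $n\ge 4$) yet vanishes only quadratically at $\partial\mathbb{B}$, so the weighted Schauder theory together with Krasovskii-type pointwise bounds on $G_{\mathbb{B}}$ and its derivatives up to fourth order must be accurate enough that $(\Delta^2-L_\Phi)G_{\mathbb{B}}(\cdot,y)$ is genuinely of smaller order than $G_{\mathbb{B}}(\cdot,y)$ in the relevant norm. Once this is in place, the contraction is routine and the nonnegativity of $u$ follows at once from the representation formula.
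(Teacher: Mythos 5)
This lemma is not proved in the paper; it is quoted from Grunau and Robert \cite{grunau_robert-2010}, and the sentence immediately after the statement remarks that their argument is a blow-up analysis, which is why the constant $\varepsilon_0$ is not explicitly computable. Your reduction to positivity of the clamped-plate Green's function $G_\Omega$ via the representation formula is the correct and standard first step, and Boggio's explicit kernel and uniform Krasovskii-type bounds are indeed central tools in the cited work, so the raw ingredients you list are the right ones.

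The contraction-mapping strategy you then sketch is, however, a genuinely different route, and as written it contains a real gap. The Dirichlet problem you pose for $R_\Phi(\cdot,y)$ has source $(\Delta^2-L_\Phi)G_{\mathbb{B}}(\cdot,y)$, whose singularity at $x=y$ is of order $|x-y|^{-n}$ (all four derivatives fall on the fundamental solution, in every dimension). This is not locally integrable, so $R_\Phi(\cdot,y)$ cannot be produced by a standard representation formula or interior Schauder estimate; and a pointwise bound of the kind you want, $|R_\Phi(x,y)|\le\tfrac12 G_{\mathbb{B}}(x,y)$, uniformly up to the boundary and along the diagonal, would itself amount to two-sided Green's function estimates for the perturbed operator $L_\Phi$, which already encode the positivity one is trying to prove. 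You flag this as the ``main obstacle'' and assert the contraction is routine once it is resolved, but the obstacle is not a technicality to be deferred --- it is the entire content of the result, and the contraction framework offers no independent leverage on it. Grunau and Robert avoid the circularity by compactness: supposing a sequence $\Omega_k\to\mathbb{B}$ in $C^{4+\alpha}$ along which positivity fails, one rescales around points where $G_{\Omega_k}$ vanishes or changes sign, and uniform Krasovskii bounds give enough compactness to pass to a blow-up limit (Boggio's kernel on the ball in one regime, a positive biharmonic function on a half-space in the other), contradicting the assumed failure of positivity. This is robust but inherently non-quantitative, which is exactly why the paper notes that $\varepsilon_0$ cannot be computed.
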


The proof is based on the blow-up analysis and the dependency of $\varepsilon_0$ on $n$ and $\alpha$ is not explicitly computable. 
Choosing $\varepsilon_0>0$ smaller if necessary, we also have the uniform Schauder estimates as follows. 

\begin{lemma}
\label{lemma-schauder}
There are constants $\varepsilon_0>0$ and $C>0$ such that solutions $u,\psi$ respectively to \eqref{dirichlet} and \eqref{torsion} satisfy
\begin{equation*}
 \|u\|_{C^{4+\alpha}(\overline{\Omega})}+\|\psi\|_{C^{4+\alpha}(\overline{\Omega})}\leq C
\end{equation*}
for any domain $\Omega$ that is $\varepsilon_0$-close to $\mathbb{B}$ in the $C^{4+\alpha}$-sense. 
\end{lemma}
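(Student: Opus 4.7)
The plan is to reduce both estimates to standard Schauder theory on the fixed reference domain $\mathbb{B}$ by pulling back the boundary value problems via the diffeomorphism $\Phi\in C^{4+\alpha}(\overline{\mathbb{B}},\overline{\Omega})$ and using the smallness of $\|\Phi-\mathrm{Id}\|_{C^{4+\alpha}(\overline{\mathbb{B}})}$ as a perturbation parameter. Throughout, $\varepsilon_0$ will be shrunk finitely many times so that all requirements are simultaneously met.

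First, I set $\tilde{u}:=u\circ\Phi$ and $\tilde{\psi}:=\psi\circ\Phi$. A straightforward chain-rule computation rewrites \eqref{dirichlet} and \eqref{torsion} as
\begin{equation*}
 \mathcal{L}_\Phi^{(4)}\tilde{u}=J_\Phi\ \text{in}\ \mathbb{B},\quad \tilde{u}=B_\Phi\tilde{u}=0\ \text{on}\ \partial\mathbb{B},\qquad -\mathcal{L}_\Phi^{(2)}\tilde{\psi}=1\ \text{in}\ \mathbb{B},\quad \tilde{\psi}=0\ \text{on}\ \partial\mathbb{B},
\end{equation*}
where the coefficients of the elliptic operators $\mathcal{L}_\Phi^{(4)}$, $\mathcal{L}_\Phi^{(2)}$, the Jacobian-type right-hand side $J_\Phi$, and the transformed Neumann condition $B_\Phi$ are rational expressions in derivatives of $\Phi$ and $(D\Phi)^{-1}$ up to orders $4$ and $2$ respectively. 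For $\varepsilon_0$ small, these coefficients are uniformly bounded in $C^{\alpha}(\overline{\mathbb{B}})$ (in fact in $C^{k+\alpha}$ to the order required for Schauder estimates), the leading symbols of $\mathcal{L}_\Phi^{(j)}$ stay arbitrarily close to those of $\Delta^2$ and $-\Delta$, and the transformed boundary operators satisfy the Lopatinsky--Shapiro complementing condition uniformly.

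Next, I apply the linear Schauder estimates on the fixed ball: the Agmon--Douglis--Nirenberg estimate for the biharmonic Dirichlet system yields
\begin{equation*}
 \|\tilde{u}\|_{C^{4+\alpha}(\overline{\mathbb{B}})}\leq C\bigl(\|J_\Phi\|_{C^{\alpha}(\overline{\mathbb{B}})}+\|\tilde{u}\|_{C^0(\overline{\mathbb{B}})}\bigr),
\end{equation*}
and analogously for $\tilde{\psi}$. The $C^\alpha$-norm of $J_\Phi$ is bounded uniformly in $\Phi$. The $L^\infty$-control of $\tilde{u}$ follows from the representation via the biharmonic Green function on $\mathbb{B}$ applied to the pulled-back equation (or, equivalently, from Lemma \ref{lemma-positivity_preserving} together with the identity $\int_\Omega u\,dx=\int_\Omega v^2\,dx$ and a scaling argument), while $\|\tilde{\psi}\|_{L^\infty}$ is controlled by the classical maximum principle. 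Finally, since $\|\Phi-\mathrm{Id}\|_{C^{4+\alpha}}<\varepsilon_0$ implies a uniform bound on $\|\Phi^{-1}-\mathrm{Id}\|_{C^{4+\alpha}(\overline{\Omega})}$ by the inverse function theorem, the compositions $u=\tilde{u}\circ\Phi^{-1}$ and $\psi=\tilde{\psi}\circ\Phi^{-1}$ inherit uniform $C^{4+\alpha}$ bounds.

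The main delicate point is checking that the constant $C$ in the Schauder estimate on $\mathbb{B}$ depends only on the ellipticity constant and the $C^\alpha$-norms of the coefficients of $\mathcal{L}_\Phi^{(j)}$, so that it can be chosen independently of $\Omega$ in the class considered. For $\varepsilon_0$ small this is a stability statement for the Agmon--Douglis--Nirenberg system under small perturbations of coefficients, which is standard once both the uniform ellipticity and the uniform complementing condition are verified; both are continuous properties of $\Phi$ at $\mathrm{Id}$ and so hold on a neighborhood of it in $C^{4+\alpha}$.
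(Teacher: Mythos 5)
Your overall strategy—pull the problem back to the fixed reference ball $\mathbb{B}$ via $\Phi$, treat $\mathcal{L}_\Phi^{(4)}$ and $\mathcal{L}_\Phi^{(2)}$ as small perturbations of $\Delta^2$ and $-\Delta$, and invoke ADN Schauder theory with constants depending only on ellipticity and coefficient norms—is the same starting point as the paper. But the execution diverges at the crucial step, and there is a gap.

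You apply the ADN estimate in the form $\|\tilde u\|_{C^{4+\alpha}(\overline{\mathbb{B}})}\leq C(\|J_\Phi\|_{C^\alpha}+\|\tilde u\|_{C^0})$, which then forces you to produce a \emph{uniform} $C^0$ bound on $\tilde u$ independently. Your two suggestions for this do not close the gap. The ``biharmonic Green function representation applied to the pulled-back equation'' must use the Green function of the \emph{perturbed} operator $\mathcal{L}_\Phi^{(4)}$, not of $\Delta^2_{\mathbb B}$; uniform $L^\infty$ control of that Green function over the family of operators is precisely the invertibility statement you are trying to establish, so this is circular unless you expand around $\Delta^2_{\mathbb B}$ and absorb the error, which is a different argument. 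The alternative via Lemma \ref{lemma-positivity_preserving} and the identity $\int_\Omega u = \int_\Omega v^2$ fails: positivity preserving yields $u\geq0$, which is a one-sided bound, and the identity gives only $L^1$-type control with the unknown $v=-\Delta u$ on the right; there is no obvious ``scaling argument'' promoting either to $L^\infty$. For the second-order problem $\psi$ the classical maximum principle indeed gives the $L^\infty$ bound, but for the fourth-order problem the absence of a maximum principle is exactly the obstruction, so the gap is genuine.

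The paper sidesteps the $C^0$ bound entirely: it writes $\Phi^\ast(\Delta_\Omega^2)^{-1}\Phi_\ast=(\Phi^\ast\Delta_\Omega^2\Phi_\ast)^{-1}$ and, using that $\|\Phi^\ast\Delta_\Omega^2\Phi_\ast-\Delta_{\mathbb B}^2\|$ is small relative to $\|(\Delta_{\mathbb B}^2)^{-1}\|^{-1}$, expands the inverse in a Neumann series converging in $\mathcal{L}(C^\alpha(\overline{\mathbb{B}}),C_D^{4+\alpha}(\overline{\mathbb{B}}))$. This directly yields a uniform operator-norm bound on $(\Delta_\Omega^2)^{-1}$, with no separate a priori $C^0$ control required. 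If you replace your ADN-with-lower-order-term step by this perturbative inversion (equivalently: write $\Delta_{\mathbb B}^2\tilde u=J_\Phi+(\Delta_{\mathbb B}^2-\mathcal{L}_\Phi^{(4)})\tilde u$, apply $(\Delta_{\mathbb B}^2)^{-1}$, and absorb the small perturbation into the left-hand side), your argument becomes correct and coincides with the paper's.
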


\begin{proof}
Let us denote by $\Delta_\Omega^2\in\mathcal{L}(C_D^{4+\alpha}(\overline{\Omega}),C^\alpha(\overline{\Omega}))$ the biharmonic operator $\Delta^2$ acting on functions defined on $\Omega$, where
\begin{equation*}
 C_D^{4+\alpha}(\overline{\Omega}):=\left\{v\in C^{4+\alpha}(\overline{\Omega}) \;\middle|\; v=\frac{\partial v}{\partial\nu}=0 \ \text{on} \ \partial\Omega\right\}. 
\end{equation*}
By the Schauder theory (see Agmon, Douglis and Nirenberg \cite{agmon_douglis_nirenberg-1959}), we know the existence of the inverse $(\Delta^2_\Omega)^{-1}\in\mathcal{L}(C^\alpha(\overline{\Omega}),C_D^{4+\alpha}(\overline{\Omega}))$. 
Hence we need to show that the operator norms of $(\Delta^2_\Omega)^{-1}$ are estimated uniformly in $\Omega$. 

Recall that, by definition, there is a diffeomorphism $\Phi\in C^{4+\alpha}(\overline{\mathbb{B}},\overline{\Omega})$ with $\|\Phi-{\rm Id}\|_{C^{4+\alpha}(\overline{\mathbb{B}})}<\varepsilon_0$. 
Denoting by $\Phi_\ast$ and $\Phi^\ast$ respectively the push-forward and pull-back operators defined by
\begin{equation*}
 \Phi_\ast v(x):=v(\Phi^{-1}(x)), \quad \Phi^\ast u(\xi):=u(\Phi(\xi)) \quad (x\in\Omega, \ \xi\in\mathbb{B}), 
\end{equation*}
we readily check that
\begin{equation*}
 \|\Phi^\ast\Delta_\Omega^2\Phi_\ast-\Delta_\mathbb{B}^2\|_{\mathcal{L}(C_D^{4+\alpha}(\overline{\mathbb{B}}),C^\alpha(\overline{\mathbb{B}}))}<\left(2\|(\Delta_\mathbb{B}^2)^{-1}\|_{\mathcal{L}(C^{\alpha}(\overline{\mathbb{B}}),C_D^{4+\alpha}(\overline{\mathbb{B}}))}\right)^{-1}
\end{equation*}
holds for small $\varepsilon_0>0$. 
Hence, 
\begin{equation}
\label{neumann_series}
\begin{aligned}
 \Phi^\ast\left(\Delta_\Omega^2\right)^{-1}\Phi_\ast&=(\Phi^\ast\Delta_\Omega^2\Phi_\ast)^{-1}\\
 &=\left[I-\left(\Delta_\mathbb{B}^2\right)^{-1}\left(\Delta_\mathbb{B}^2-\Phi^\ast\Delta_\Omega^2\Phi_\ast\right)\right]^{-1}\left(\Delta_\mathbb{B}^2\right)^{-1}\\
 &=\sum_{k=0}^\infty \left[\left(\Delta_\mathbb{B}^2\right)^{-1}\left(\Delta_\mathbb{B}^2-\Phi^\ast\Delta_\Omega^2\Phi_\ast\right)\right]^k\left(\Delta_\mathbb{B}^2\right)^{-1}
\end{aligned}
\end{equation}
converges in $\mathcal{L}(C^{\alpha}(\overline{\mathbb{B}}),C_D^{4+\alpha}(\overline{\mathbb{B}}))$ and
\begin{align*}
 \left\|\left(\Delta_\Omega^2\right)^{-1}\right\|_{\mathcal{L}(C^{\alpha}(\overline{\Omega}),C_D^{4+\alpha}(\overline{\Omega}))}
 &\leq\|\Phi_\ast\|_{\mathcal{L}(C_D^{4+\alpha}(\overline{\mathbb{B}}),C_D^{4+\alpha}(\overline{\Omega}))}\|\Phi^\ast\|_{\mathcal{L}(C^{\alpha}(\overline{\Omega}),C^{\alpha}
 (\overline{\mathbb{B}}))}\\
 &\quad\times 2\left\|\left(\Delta_\mathbb{B}^2\right)^{-1}\right\|_{\mathcal{L}(C^\alpha(\overline{\mathbb{B}}),C_D^{4+\alpha}(\overline{\mathbb{B}}))}, 
\end{align*}
where the right hand side is bounded from above by a constant $C>0$ independent of $\Omega$. 
This yields
\begin{equation*}
 \|u\|_{C^{4+\alpha}(\overline{\Omega})}=\|(\Delta_\Omega^2)^{-1}[1]\|_{C^{4+\alpha}(\overline{\Omega})}\leq C. 
\end{equation*}
The estimate for $\psi$ follows in a similar manner. 
\end{proof}

\begin{proposition}
\label{proposition-distance}
Let $\varepsilon_0>0$ be a constant such that Lemmas \ref{lemma-positivity_preserving}, \ref{lemma-schauder} hold. 
Then there is a uniform constant $\eta>0$ such that
\begin{equation*}
 u(x)\geq \eta \left(d_{\partial\Omega}(x)\right)^2 \quad (x\in\Omega)
\end{equation*}
holds for a unique solution $u$ to \eqref{dirichlet} in any domain $\Omega$ that is $\varepsilon_0$-close to $\mathbb{B}$ in the $C^{4+\alpha}$-sense. 
\end{proposition}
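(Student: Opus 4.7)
The plan is to use the (second order) torsion function $\psi$ solving \eqref{torsion} on $\Omega$ as the building block of a comparison barrier, and to apply the positivity preserving property (Lemma \ref{lemma-positivity_preserving}) to the difference $u-\eta\psi^2$. Since $\psi|_{\partial\Omega}=0$, we automatically have $\psi^2=0$ and $\partial_\nu(\psi^2)=2\psi\,\partial_\nu\psi=0$ on $\partial\Omega$, so $u-\eta\psi^2$ satisfies the homogeneous clamped boundary conditions of \eqref{dirichlet}.

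A direct computation using $\Delta\psi=-1$ (hence $\nabla\psi\cdot\nabla\Delta\psi=0$) gives
\begin{equation*}
 \Delta(\psi^2)=2|\nabla\psi|^2-2\psi, \qquad \Delta^2(\psi^2)=4|D^2\psi|^2+2.
\end{equation*}
Lemma \ref{lemma-schauder} supplies a uniform bound $\|D^2\psi\|_{L^\infty(\Omega)}\leq C$ valid for every $\Omega$ that is $\varepsilon_0$-close to $\mathbb{B}$, so there is a uniform $M>0$ with $\Delta^2(\psi^2)\leq M$ in $\Omega$. Choosing $\eta\in(0,1/M]$ and setting $w:=u-\eta\psi^2$, one obtains $\Delta^2 w=1-\eta\Delta^2(\psi^2)\geq 0$ in $\Omega$ together with $w=\partial_\nu w=0$ on $\partial\Omega$, and Lemma \ref{lemma-positivity_preserving} then yields $u\geq\eta\psi^2$ pointwise in $\Omega$.

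To finish, I need to compare $\psi$ with $d_{\partial\Omega}$ uniformly in $\Omega$. On the unit ball the torsion function is $\psi_\mathbb{B}(x)=(1-|x|^2)/(2n)$ with $-\partial_\nu\psi_\mathbb{B}=1/n$ on $\partial\mathbb{B}$, and the Neumann-series perturbation argument already used in the proof of Lemma \ref{lemma-schauder} shows that $\Phi^\ast\psi$ converges to $\psi_\mathbb{B}$ in $C^{4+\alpha}(\overline{\mathbb{B}})$ as $\varepsilon_0\to 0$. Shrinking $\varepsilon_0$ if necessary, this gives $-\partial_\nu\psi\geq 1/(2n)$ uniformly on $\partial\Omega$; combined with the uniform $C^2$-bound on $\psi$, a Taylor expansion in a tubular neighborhood of $\partial\Omega$, together with strict interior positivity transferred from $\psi_\mathbb{B}$ by compactness, yields $\psi(x)\geq c_1\,d_{\partial\Omega}(x)$ in $\Omega$ for some uniform $c_1>0$. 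Squaring and inserting into $u\geq\eta\psi^2$ gives the asserted inequality with $\eta c_1^2$ in place of $\eta$.

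The only real obstacle I foresee is bookkeeping: every constant ($M$, $c_1$, and ultimately $\eta$) has to remain independent of $\Omega$ within the $\varepsilon_0$-class. All the needed uniformity is already encoded in Lemma \ref{lemma-schauder} and in the $C^{4+\alpha}$-closeness of $\psi$ to $\psi_\mathbb{B}$ which its proof supplies, so the argument should go through without substantive additional effort.
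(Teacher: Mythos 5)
Your proof is correct and follows essentially the same strategy as the paper: form the barrier $w=u-\eta\psi^2$ (the paper uses $c_\Omega=[2(1+2\max_{\overline\Omega}|D^2\psi|^2)]^{-1}$, which is exactly your $\eta\le 1/M$), compute $\Delta^2(\psi^2)=4|D^2\psi|^2+2$, and invoke the positivity preserving property. The only cosmetic difference is in the final step, where the paper obtains $\psi\ge\mu_0\,d_{\partial\Omega}$ directly from Hopf's lemma together with the uniform interior sphere condition, whereas you derive it via the Neumann-series closeness of $\psi$ to $\psi_\mathbb{B}$ plus a Taylor expansion; both are valid.
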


\begin{proof}
Let $\psi\in C^{4+\alpha}(\overline{\Omega})$ be a solution to \eqref{torsion} and set
\begin{equation*}
 w:=u-c_\Omega\psi^2, \quad c_\Omega:=\left[2\left(1+2\max_{\overline{\Omega}}\left|D^2\psi\right|^2\right)\right]^{-1}\geq c_0>0, 
\end{equation*}
where the existence of the lower bound $c_0$ independent of $\Omega$ follows from Lemma \ref{lemma-schauder}. 
Then $w$ satisfies $w=\partial_\nu w=0$ on $\partial\Omega$ and
\begin{align*}
 \Delta^2 w&=\Delta\left\{\Delta u+2c_\Omega\left(\psi-|\nabla\psi|^2\right)\right\}\\
 &=1-2c_\Omega\left(1+2\left|D^2\psi\right|^2\right)\geq 0 \quad \text{in} \ \Omega. 
\end{align*}
By Lemma \ref{lemma-positivity_preserving}, we have $w\geq 0$, i.e., 
\begin{equation*}
 u\geq c_\Omega\psi^2\geq c_0\mu_0^2\left(d_{\partial\Omega}(x)\right)^2 \quad \text{in} \ \Omega, 
\end{equation*}
where $\psi\geq \mu_0d_{\partial\Omega}(x)$ for a small uniform constant $\mu_0>0$ follows from (a proof of) Hopf's lemma with the uniform interior sphere condition of $\Omega$. 
\end{proof}

A glimpse of the proof of Lemma \ref{lemma-schauder} also yields the following perturbation result, which will be used in the next section. 
Note that 
\begin{equation}
\label{explicit_radial_solution}
 u_0=\frac{(|x|^2-1)^2}{8n(n+2)} \quad (x\in\mathbb{R}^n)
\end{equation}
is a unique solution to \eqref{dirichlet} for $\Omega=\mathbb{B}$ and $-\Delta u_0$ is a quadratic polynomial of the form \eqref{quadratic_polynomial}. 

\begin{lemma}
\label{lemma-perturbation}
For any small constant $c>0$, there is $\varepsilon_0>0$ such that 
\begin{equation*}
 \left\|u-u_0\right\|_{C^{4+\alpha}(\overline{\Omega})}<c
\end{equation*}
for any domain $\Omega $ that is $\varepsilon_0$-close to $\mathbb{B}$ in the $C^{4+\alpha}$-sense, where $u\in C^{4+\alpha}(\overline{\Omega})$ is a solution to \eqref{dirichlet}. 
In particular, $v:=-\Delta u$ attains its maximum at an interior point $z\in\Omega$ with $|z|<1/2$. 
\end{lemma}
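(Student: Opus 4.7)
The plan is to use the Neumann series expansion \eqref{neumann_series} derived in the proof of Lemma \ref{lemma-schauder}, which shows that the biharmonic inverse on $\Omega$ (after conjugation by $\Phi^\ast$, $\Phi_\ast$) depends continuously on the perturbation $\Phi^\ast\Delta_\Omega^2\Phi_\ast-\Delta_\mathbb{B}^2$. Since this perturbation tends to zero in $\mathcal{L}(C_D^{4+\alpha}(\overline{\mathbb{B}}),C^\alpha(\overline{\mathbb{B}}))$ as $\varepsilon_0\to 0$ (using $\Phi\to{\rm Id}$ in $C^{4+\alpha}$), the series shows that
\begin{equation*}
 \Phi^\ast(\Delta_\Omega^2)^{-1}\Phi_\ast \longrightarrow (\Delta_\mathbb{B}^2)^{-1} \quad\text{in }\mathcal{L}(C^\alpha(\overline{\mathbb{B}}),C_D^{4+\alpha}(\overline{\mathbb{B}})).
\end{equation*}
Applying both sides to the constant function $1$ (and using $\Phi_\ast 1=1$) yields $\Phi^\ast u=u\circ\Phi\to u_0$ in $C^{4+\alpha}(\overline{\mathbb{B}})$.

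Next I would transfer this back to $\overline{\Omega}$. The identity $\Phi^\ast(u-u_0)=(\Phi^\ast u-u_0)+(u_0-\Phi^\ast u_0)$ shows that the pulled-back difference has norm bounded by the two terms on the right: the first tends to $0$ by the step above, and the second tends to $0$ because $u_0$ is a smooth polynomial and $\Phi\to{\rm Id}$ in $C^{4+\alpha}$. Since $\Phi^\ast:C^{4+\alpha}(\overline{\Omega})\to C^{4+\alpha}(\overline{\mathbb{B}})$ is an isomorphism with operator norm bounded uniformly in $\Omega$ (by the same closeness of $\Phi$ to the identity), we conclude $\|u-u_0\|_{C^{4+\alpha}(\overline{\Omega})}\to 0$ as $\varepsilon_0\to 0$. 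Choosing $\varepsilon_0$ small enough produces the desired estimate with any prescribed $c>0$.

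For the second assertion, I would exploit the explicit form $v_0:=-\Delta u_0=\tfrac{1}{2(n+2)}-\tfrac{|x|^2}{2n}$, which is strictly concave on $\mathbb{R}^n$, attains its unique maximum at $x=0$, and takes the value $-\tfrac{1}{n(n+2)}<v_0(0)$ on $\partial\mathbb{B}$. The first part of the lemma gives $\|v-v_0\|_{C^{2+\alpha}(\overline{\Omega})}<c'$ for arbitrarily small $c'$ after shrinking $\varepsilon_0$. Since $\Omega$ is Hausdorff-close to $\mathbb{B}$, the boundary values of $v$ remain strictly below $v_0(0)/2$, so the maximum of $v$ on $\overline{\Omega}$ is attained at some interior point $z\in\Omega$. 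At that point $\nabla v(z)=0$, and writing $-\tfrac{z}{n}=\nabla v_0(z)=-\nabla(v-v_0)(z)$ gives $|z|\leq n\|\nabla(v-v_0)\|_{C^0(\overline{\Omega})}\leq nc'<1/2$ for $c'$ small.

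The main obstacle is technical rather than conceptual: one must verify that the pull-back and push-forward operators $\Phi^\ast,\Phi_\ast$ are isomorphisms of the relevant $C^{4+\alpha}$ spaces with operator norms bounded uniformly in $\Omega$, and similarly that $u_0\circ\Phi\to u_0$ in $C^{4+\alpha}(\overline{\mathbb{B}})$. These are standard consequences of the chain rule combined with the fact that $u_0$ is a fixed smooth polynomial and $\|\Phi-{\rm Id}\|_{C^{4+\alpha}(\overline{\mathbb{B}})}<\varepsilon_0$, so they can be handled by bookkeeping once the Neumann-series framework of Lemma \ref{lemma-schauder} is in place.
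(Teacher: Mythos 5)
Your proposal is correct and follows essentially the same approach as the paper: both use the Neumann series \eqref{neumann_series} to control $\Phi^\ast u-u_0$ in $C^{4+\alpha}(\overline{\mathbb{B}})$ and then transfer back via the uniformly bounded conjugation operators and the smallness of $\Phi^\ast u_0-u_0$ (the paper writes this as $\Phi_\ast u_0 - u_0$, a trivially equivalent formulation). You additionally spell out the argument for the ``in particular'' clause concerning the maximum point $z$ with $|z|<1/2$, which the paper leaves implicit; your reasoning via $\nabla v(z)=0$ and $|z|=n|\nabla(v-v_0)(z)|$ is sound.
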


\begin{proof}
In view of \eqref{neumann_series}, we see that
\begin{equation*}
 \|\Phi^\ast u-u_0\|_{C^{4+\alpha}(\overline{\mathbb{B}})}=\|\Phi^\ast(\Delta_\Omega^2)^{-1}\Phi_\ast[1]-(\Delta_\mathbb{B}^2)^{-1}[1]\|_{C^{4+\alpha}(\overline{\mathbb{B}})}
\end{equation*}
can be arbitrarily small by choosing a sufficiently small $\varepsilon_0>0$. 
We also have $\|\Phi_\ast u_0-u_0\|_{C^{4+\alpha}(\overline{\Omega})}<c/2$ for small $\varepsilon_0>0$. 
\end{proof}

\section{Weighted inequalities for harmonic functions}
\label{section-weighted_estimates}

In order to derive the stability estimate \eqref{stability_estimate}, we shall take the approach by Magnanini and Poggesi \cite{magnanini_poggesi-2020} with our integral identity \eqref{integral_identity_4th}. 
Let us first observe that, by using the harmonic function
\begin{equation}
\label{harmonic_difference}
 h:=v-Q=-\Delta u-\frac{R^2-|x-z|^2}{2n}, 
\end{equation}
the integral identity \eqref{integral_identity_4th} can be written as
\begin{equation}
\label{harmonic_identity}
 \int_\Omega u|D^2 h|^2\,dx=\frac14\int_{\partial\Omega}\left\{c^2-(\Delta u)^2\right\}\frac{\partial h}{\partial\nu}\,dS, 
\end{equation}
and $\rho_2-\rho_1$ can be related to the oscillation of $h$ on $\partial\Omega$ by
\begin{equation}
\label{distance_to_oscillation}
\begin{aligned}
 {\rho_2}^2-{\rho_1}^2&=\max_{x\in\partial\Omega}|x-z|^2-\min_{x\in\partial\Omega}|x-z|^2\\
 &\leq 2n\left(\max_{\partial\Omega}h-\min_{\partial\Omega}h\right)+4n\|\Delta u-c\|_{L^\infty(\partial\Omega)}. 
\end{aligned}
\end{equation}
By Lemma \ref{lemma-perturbation}, we may assume that $z\in\Omega$ with $|z|<1/2$ is a maximum point of $v$ in $\Omega$ and thus
\begin{equation*}
 \nabla h(z)=\nabla v(z)-\nabla Q(z)=0, 
\end{equation*}
and moreover $\|\nabla h\|_{L^\infty(\Omega)}+\|D^2h\|_{L^\infty(\Omega)}$ is sufficiently small. 

In view of Proposition \ref{proposition-distance}, we can relate the oscillation of $h$ to the left hand side of \eqref{harmonic_identity} by the chain of inequalities
\begin{equation}
\label{sequence_inequalities}
\begin{aligned}
 \max_{\partial\Omega}h-\min_{\partial\Omega}h
 &\leq C_1\|h-h_\Omega\|_{L^{2^\ast}(\Omega)}^{2^\ast/(n+2^\ast)}\\
 &\leq C_2\|\nabla h\|_{L^2(\Omega)}^{2^\ast/(n+2^\ast)}
 \leq C_3\|d_{\partial\Omega}D^2h\|_{L^2(\Omega)}^{2^\ast/(n+2^\ast)}, 
\end{aligned}
\end{equation}
where $h_\Omega:=|\Omega|^{-1}\int_\Omega h\,dx$ is the mean value of $h$ over $\Omega$, $2^\ast:=2n/(n-2)$ for $n\geq3$ and $2^\ast$ is arbitrarily large number for $n=2$. 
The first inequality is due to Magnanini and Poggesi \cite{magnanini_poggesi-2020}, and the second one is the Poincar\'e-Sobolev inequality, and the third one was essentially proved by Hurri-Syrj\"anen \cite{hurri-syrjanen-1994}. 
We emphasize that the only nonlinear inequality is the first one and this reflects in the nonlinear nature of the stability estimate \eqref{stability_estimate}. 

\begin{lemma}[Hurri-Syrj\"anen \cite{hurri-syrjanen-1994}, Magnanini and Poggesi {\cite{magnanini_poggesi-2020}}]
\label{lemma-harmonic_inequality}
There are $\varepsilon_0>0$ and $C>0$ such that, if $\Omega$ is $\varepsilon_0$-close to $\mathbb{B}$ in the $C^{4+\alpha}$-sense, then the harmonic function $h$ defined by \eqref{harmonic_difference} satisfies
\begin{align*}
 \max_{\partial\Omega} h-\min_{\partial\Omega} h&\leq C\|h-h_\Omega\|_{L^{2^\ast}(\Omega)}^{2^\ast/(n+2^\ast)},\\
 \int_\Omega|\nabla h|^2\,dx&\leq C\int_\Omega d_{\partial\Omega}(x)^2|D^2 h|^2\,dx. 
\end{align*}
\end{lemma}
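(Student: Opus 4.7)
The plan is to prove the two inequalities separately, exploiting the uniform geometric properties of $\Omega$ that follow from its $C^{4+\alpha}$-closeness to $\mathbb{B}$.

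For the first inequality, I would pick $x_1,x_2\in\partial\Omega$ where $h$ attains its maximum and minimum on $\partial\Omega$. By the uniform interior sphere condition, for each $r\in(0,r_0]$ with $r_0>0$ independent of $\Omega$, there is an interior ball $B_r(y_i)\subset\Omega$ tangent to $\partial\Omega$ at $x_i$. Since $h$ is harmonic, the mean value property together with H\"older's inequality yields
\begin{equation*}
 |h(y_i)-h_\Omega|=\left|\frac{1}{|B_r|}\int_{B_r(y_i)}(h-h_\Omega)\,dx\right|\leq Cr^{-n/2^\ast}\|h-h_\Omega\|_{L^{2^\ast}(\Omega)},
\end{equation*}
while the uniform Schauder bound of Lemma \ref{lemma-schauder} gives $|h(x_i)-h(y_i)|\leq C_0 r$. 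Combining and minimizing the resulting bound $C_0 r+Cr^{-n/2^\ast}\|h-h_\Omega\|_{L^{2^\ast}(\Omega)}$ over $r\in(0,r_0]$ produces the stated exponent $2^\ast/(n+2^\ast)$. The interior minimum is attained once $\|h-h_\Omega\|_{L^{2^\ast}(\Omega)}$ is sufficiently small, which is the regime of interest by Lemma \ref{lemma-perturbation}; in the complementary regime the trivial Schauder bound on $\max_{\partial\Omega}h-\min_{\partial\Omega}h$ yields the inequality after enlarging the constant. Taking the difference $h(x_1)-h(x_2)$ then completes this step.

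For the second inequality, I would apply the improved Poincar\'e inequality of Hurri-Syrj\"anen \cite{hurri-syrjanen-1994} to each harmonic partial derivative $\partial_i h$, obtaining
\begin{equation*}
 \int_\Omega|\partial_i h-(\partial_i h)_\Omega|^2\,dx\leq C\int_\Omega d_{\partial\Omega}^2|\nabla\partial_i h|^2\,dx,
\end{equation*}
and sum in $i$ to control $\|\nabla h-c\|_{L^2(\Omega)}^2\leq C\int_\Omega d_{\partial\Omega}^2|D^2h|^2\,dx$ with $c:=(\nabla h)_\Omega$. To remove the mean, I exploit that $\nabla h(z)=0$ and that $B_{r_0}(z)\subset\Omega$ for a uniform $r_0>0$ (since $|z|<1/2$ by Lemma \ref{lemma-perturbation}, and $\Omega$ is $\varepsilon_0$-close to $\mathbb{B}$). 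The mean value property for the harmonic vector $\nabla h$ then gives $\int_{B_{r_0}(z)}\nabla h\,dx=0$, so that $|\Omega|\,c=\int_{\Omega\setminus B_{r_0}(z)}(\nabla h-c)\,dx+|\Omega\setminus B_{r_0}(z)|\,c$. Rearranging yields $|B_{r_0}|\,|c|\leq|\Omega\setminus B_{r_0}(z)|^{1/2}\|\nabla h-c\|_{L^2(\Omega)}$, whence $|\Omega|\,|c|^2\leq C\int_\Omega d_{\partial\Omega}^2|D^2h|^2\,dx$, which combined with the previous estimate produces the desired inequality.

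The main obstacle is verifying that all constants above are uniform over the family of admissible $\Omega$. Uniformity of the interior sphere radius $r_0$, of the John constant entering Hurri-Syrj\"anen's inequality, and of the inclusion $B_{r_0}(z)\subset\Omega$ all follow from the $C^{4+\alpha}$-closeness of $\Omega$ to $\mathbb{B}$ upon choosing $\varepsilon_0>0$ small enough; the uniform $C^1$ bound on $h$ needed for the optimization in the first inequality is supplied by Lemmas \ref{lemma-schauder} and \ref{lemma-perturbation}.
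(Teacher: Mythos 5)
Your two arguments are correct, and since the paper itself proves this lemma only by citation (to \cite[Lemma 2.6]{magnanini_poggesi-2020} for the first inequality and \cite[Corollary 2.3]{magnanini_poggesi-2020} for the second, after checking the required uniform $L^\infty$-bound on $\nabla h$ and the smallness of $\|h-h_\Omega\|_{L^{2^\ast}(\Omega)}$), what you have supplied is a genuine self-contained proof rather than a variant. A brief comparison is still useful.

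For the first inequality, your argument (uniform interior tangent ball, mean-value representation of $h(y_i)$, H\"older with exponent $2^\ast$, a Lipschitz comparison $|h(x_i)-h(y_i)|\leq C_0r$ using the uniform Schauder bound, and optimizing over $r\in(0,r_0]$) is precisely the mechanism behind the cited \cite[Lemma 2.6]{magnanini_poggesi-2020}, and your observation that the minimizer is admissible only when $\|h-h_\Omega\|_{L^{2^\ast}(\Omega)}$ is small, with the complementary regime handled by the Schauder bound alone, is the same dichotomy the paper invokes through Lemmas \ref{lemma-schauder} and \ref{lemma-perturbation}. For the second inequality, applying the Hurri-Syrj\"anen weighted Poincar\'e inequality with $p=r=2$, $\alpha=1$ componentwise to the harmonic functions $\partial_ih$ is exactly the route described in Remark \ref{remark-harmonic_inequalities}. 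Your device for killing the mean $c=(\nabla h)_\Omega$ — exploiting the normalization $\nabla h(z)=0$ (which is part of the paper's setup, though not repeated in the lemma statement) and the uniform inclusion $B_{r_0}(z)\subset\Omega$ via the mean value property for the harmonic vector field $\nabla h$, followed by Cauchy--Schwarz on $\Omega\setminus B_{r_0}(z)$ — is clean and elementary; it makes explicit the normalization that the cited corollary must also be using, since the inequality $\|\nabla h\|_{L^2(\Omega)}\leq C\|d_{\partial\Omega}D^2h\|_{L^2(\Omega)}$ is false without some such anchoring of $\nabla h$ (test with an affine $h$). The one thing worth stating explicitly, which you touch on only implicitly, is that the needed uniform bound $\|\nabla h\|_{L^\infty(\Omega)}\leq C_0$ is insensitive to the free parameter $R$ in $Q$, as $R$ shifts $h$ only by a constant; this keeps the first argument honest for all admissible choices of $Q$.
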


\begin{proof}
See \cite[Corollary 2.3]{magnanini_poggesi-2020} for the proof of the second inequality. 
In \cite[Lemma 2.6]{magnanini_poggesi-2020}, the first inequality was proved when $\|\nabla h\|_{L^\infty(\Omega)}$ is uniformly bounded and $\|h-h_\Omega\|_{L^{2^\ast}(\Omega)}$ is sufficiently small. 
This assumption is fulfilled by Lemmas \ref{lemma-schauder} and \ref{lemma-perturbation}. 
\end{proof}

\begin{remark}
\label{remark-harmonic_inequalities}
{\rm
The inequalities in Lemma \ref{lemma-harmonic_inequality} are valid in more general forms (see \cite[Theorem 2.3]{magnanini_poggesi-2021} and \cite[Lemma 2.1]{magnanini_poggesi-2020}). 
In particular, the former inequality is still valid without assuming the smallness of $\|h-h_\Omega\|_{L^{2^\ast}(\Omega)}$, and thus the use of Lammas \ref{lemma-schauder} and \ref{lemma-perturbation} is not necessary. 
As for the latter inequality, it was proved that
\begin{equation*}
 \left(\int_\Omega|v-v_{\Omega}|^r\,dx\right)^{1/r}\leq C\left(\int_\Omega \left(d_{\partial\Omega}(x)^{\alpha}|\nabla v|\right)^p\,dx\right)^{1/p}
\end{equation*}
holds for any (even non-harmonic) $v$ if $1\leq p\leq r\leq np\{n-p(1-\alpha)\}^{-1}<\infty$ and $0\leq\alpha\leq 1$. 
A simple scaling argument shows that, at least for $\alpha=1$, the range of the exponents, i.e., $1\leq p=r<\infty$, is optimal. 
Moreover, $\alpha$ cannot exceed $1$, as we can see by inserting the inverse power of a distance-like function $v\sim {d_{\partial\Omega}}^{-1}$. 
This will be an obstacle in extending our results to polyharmonic operators $(-\Delta)^m$ with $m\geq 3$. 
}
\end{remark}

Since $\|\partial_\nu h\|_{L^\infty(\partial\Omega)}$, $\|\Delta u+c\|_{L^\infty(\partial\Omega)}$ and $\rho_2+\rho_1$ are uniformly bounded with respect to $\Omega$ by Lemma \ref{lemma-schauder}, the combination of \eqref{harmonic_identity}, \eqref{distance_to_oscillation} and \eqref{sequence_inequalities} immediately results in the weaker stability estimate
\begin{equation*}
 \rho_2-\rho_1\leq C\left(\left\|\Delta u-c\right\|_{L^1(\partial\Omega)}^{1/n}+\left\|\Delta u-c\right\|_{L^\infty(\partial\Omega)}\right). 
\end{equation*}

We shall improve this estimate by carefully treat $\partial_\nu h$ in the right hand side of \eqref{harmonic_identity}. 
In fact, for the second order problem \eqref{torsion} and \eqref{od-2}, Feldman \cite{feldman-2018} derived the {\it linear} weighted trace inequality
\begin{equation*}
 \int_{\partial\Omega}|\nabla h|^2\,dS\leq C\int_\Omega d_{\partial\Omega}(x) |D^2 h|^2\,dx, 
\end{equation*}
which combined with \eqref{integral_identity_2nd} yields
\begin{equation*}
 \|\nabla h\|_{L^2(\partial\Omega)}\leq C\||\nabla\psi|-c\|_{L^2(\partial\Omega)}, 
\end{equation*}
and this estimate was used to obtain the stability estimate \eqref{sharp_estimate} (see Magnanini and Poggesi \cite{magnanini_poggesi-2020}). 
However, for \eqref{harmonic_identity}, we need to have such a trace inequality with quadratic weight ${d_{\partial\Omega}}^2$ and this type of linear estimate cannot hold in general even if the norm of the left hand side is weakened, say, to the $L^1$-norm, as one can check by inserting harmonic polynomials of higher degree to $h$. 

Our improvement of the stability estimate relies on the following {\it nonlinear} weighted trace inequality for small harmonic functions, inspired by Lemma \ref{lemma-harmonic_inequality}, where
\begin{equation*}
 \beta_p:=\left\{
 \begin{aligned}
  & \hspace{7mm} \frac13 && (1\leq p\leq 3),\\
  & \frac{n+p-1}{(n+2)p} && (3<p<\infty),\\
  & \hspace{3mm} \frac{1}{n+2} && (p=\infty). 
 \end{aligned}
 \right.
\end{equation*}

\begin{lemma}
\label{lemma-nonlinear_trace_inequality}
For $1\leq p\leq\infty$ and $0\leq\beta<\beta_p$, there are $\varepsilon_0>0$ and $C>0$ such that, if $\Omega$ is $\varepsilon_0$-close to $\mathbb{B}$ in the $C^{4+\alpha}$-sense, then the harmonic function $h$ defined by \eqref{harmonic_difference} satisfies
\begin{equation}
\label{nonlinear_trace_inequality}
 \left(\int_{\partial\Omega}\left|\nabla h\right|^p\,dS\right)^{1/p}\leq C\left[\int_\Omega d_{\partial\Omega}(x)^2|D^2h|^2\,dx\right]^{\beta}. 
\end{equation}
Moreover, for $p=\infty$, the above estimate holds with $\beta=\beta_\infty=1/(n+2)$. 
\end{lemma}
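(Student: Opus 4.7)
Setting $A := \|d_{\partial\Omega}D^2 h\|_{L^2(\Omega)}$, my plan is to establish the target estimate at the two endpoints $p=\infty$ and $p=3$ separately, and then obtain the remaining exponents by elementary interpolation between them. The $L^\infty$ case will exploit the subharmonicity of $|\nabla h|^2$, while the $L^3$ case will require a bootstrap combining the standard trace inequality with the harmonic identity $\Delta|\nabla h|^2=2|D^2h|^2$; the main obstacle I expect is closing this circular bootstrap.

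\textbf{The $L^\infty$ estimate.} For any $x_0\in\partial\Omega$, I would pick an interior point $y_r := x_0 - 2r\nu(x_0)$ so that $B_r(y_r)\subset\Omega$ (this is permitted by the uniform interior sphere condition, which follows from the $\varepsilon_0$-closeness of $\Omega$ to $\mathbb{B}$). The sub-mean-value property for the subharmonic $|\nabla h|^2$, combined with Lemma \ref{lemma-harmonic_inequality}, yields $|\nabla h(y_r)|\leq Cr^{-n/2}A$, and the uniform Lipschitz bound on $\nabla h$ provided by Lemma \ref{lemma-schauder} gives $|\nabla h(x_0)-\nabla h(y_r)|\leq Cr$. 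Summing these and optimizing the resulting inequality $|\nabla h(x_0)|\leq Cr^{-n/2}A + Cr$ over $r>0$ yields $\|\nabla h\|_{L^\infty(\partial\Omega)}\leq CA^{2/(n+2)}$, which is precisely the claim at $p=\infty$ with $\beta=1/(n+2)$.

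\textbf{The $L^3$ estimate.} The divergence theorem applied with the radial vector field $V=(x-z)/n$, using the star-shapedness of $\Omega$ about the interior point $z$ from Lemma \ref{lemma-perturbation}, produces the trace-type inequality
\begin{equation*}
\int_{\partial\Omega}|\nabla h|^3\,dS \leq C\int_\Omega\big(|\nabla h|^3 + |\nabla h|^2|D^2 h|\big)\,dx.
\end{equation*}
The first interior term is immediately $\leq CA^2$ via Lemmas \ref{lemma-schauder} and \ref{lemma-harmonic_inequality}. For the second, set $Y := \int_{\partial\Omega}|\nabla h|^3\,dS$; integrating $\int_\Omega|\nabla h|^2\Delta|\nabla h|^2$ by parts together with the harmonic identity $\Delta|\nabla h|^2 = 2|D^2 h|^2$ produces
\begin{equation*}
\int_\Omega|\nabla h|^2|D^2 h|^2\,dx \leq \int_{\partial\Omega}|\nabla h|^3|D^2 h|\,dS \leq CY,
\end{equation*}
and Cauchy--Schwarz combined with Lemma \ref{lemma-harmonic_inequality} gives $\int_\Omega|\nabla h|^2|D^2 h|\,dx \leq CA\,Y^{1/2}$. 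Substituting both bounds back converts the trace inequality into the quadratic inequality $Y \leq CA^2 + CAY^{1/2}$ for $Y^{1/2}$, which forces $Y\leq CA^2$ and thus $\|\nabla h\|_{L^3(\partial\Omega)}\leq CA^{2/3}$.

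\textbf{Interpolation and closing.} For $3<p<\infty$ I interpolate between the two endpoint estimates via
\begin{equation*}
\int_{\partial\Omega}|\nabla h|^p\,dS \leq \|\nabla h\|_{L^\infty(\partial\Omega)}^{p-3}\int_{\partial\Omega}|\nabla h|^3\,dS \leq CA^{2(n+p-1)/(n+2)},
\end{equation*}
which delivers exactly the claimed $\beta_p=(n+p-1)/((n+2)p)$. For $1\leq p<3$, H\"older's inequality on the bounded manifold $\partial\Omega$ reduces matters to the $L^3$ case and yields $\beta_p=1/3$. The crux of the argument is thus the $L^3$ bootstrap: neither the trace inequality nor the harmonic identity $\Delta|\nabla h|^2=2|D^2h|^2$ alone suffices to close it, and getting the quadratic dependence on $Y$ to match up is the delicate part.
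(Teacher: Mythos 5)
Your proposal is correct, but it takes a genuinely different route from the paper for the core case $p\leq 3$, and in fact proves something slightly stronger. The paper establishes the $L^1$ (and hence $L^p$, $p\leq3$) bound by a pointwise decomposition at $x\in\partial\Omega$: it writes $e\cdot\nabla h(x)$ as a solid mean value over $B_s(x-s\nu)$ plus a path integral of $e\cdot D^2h\,\nu$ along the normal, applies H\"older with a free parameter $\theta\in[0,2/3)$ in the path integral, and optimizes over $s$; letting $\theta\to 2/3$ yields any $\beta<1/3$ but never the endpoint. Your argument instead derives the $L^3$ estimate globally: a divergence-theorem trace inequality with the vector field $(x-z)/n$, valid by the uniform star-shapedness guaranteed by Lemma \ref{lemma-perturbation} and the $C^{4+\alpha}$-closeness, reduces the boundary term to the two volume integrals $\int_\Omega|\nabla h|^3$ and $\int_\Omega|\nabla h|^2|D^2h|$; the first is absorbed by Lemmas \ref{lemma-schauder} and \ref{lemma-harmonic_inequality}, and for the second the Bochner identity $\Delta|\nabla h|^2=2|D^2h|^2$ plus a sign-definite integration by parts bounds $\int_\Omega|\nabla h|^2|D^2h|^2$ by $CY$, so Cauchy--Schwarz closes the loop with the quadratic inequality $Y\leq CA^2+CAY^{1/2}$. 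This yields $\beta_3=1/3$ \emph{exactly}, not just $\beta<1/3$, and hence after the interpolation you reach $\beta_p$ for every $p\in[3,\infty]$, strictly more than the lemma claims. The $p=\infty$ case is essentially the paper's $\theta=0$ argument in a different guise (sub-mean-value of $|\nabla h|^2$ plus a Lipschitz correction), and the interpolation step for $3<p<\infty$ is the same device the paper uses. Two small matters you should make explicit, exactly as the paper does for its $s_\ast$: the minimizing radius $r_\ast=A^{2/(n+2)}$ must lie below the uniform interior-sphere radius, which you can guarantee because $A$ is small when $\varepsilon_0$ is small (Lemma \ref{lemma-perturbation} and Lemma \ref{lemma-harmonic_inequality}); and the star-shapedness constant $\inf_{\partial\Omega}(x-z)\cdot\nu>0$ should be noted to be uniform in $\Omega$ under the $\varepsilon_0$-closeness with $|z|<1/2$. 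Neither is a gap, just a line to add.
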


\begin{proof}
We denote by $C>0$ a generic constant independent of $\Omega$ that may change. 
For each fixed $x\in\partial\Omega$, set $\nu=\nu(x)$ and take a unit vector $e\in\mathbb{R}^n$ such that $|\nabla h(x)|=e\cdot\nabla h(x)$. 
By the mean value property of the harmonic function $y\mapsto e\cdot\nabla h(y)$ followed by H\"older's inequality, 
\begin{align*}
 \left|\nabla h(x)\right|&=\frac{1}{\omega_n s^n}\int_{B_s(x-s\nu)}e\cdot\nabla h(y)\,dy+\int_0^se\cdot D^2h(x-t\nu)\nu\,dt\\
 &\leq \frac{\|\nabla h\|_{L^2(\Omega)}}{\omega_n^{1/2}s^{n/2}}+\left[\int_0^s t^2|D^2h|^2\,dt\right]^{\frac{\theta}{2}}\left[\int_0^s t^{-\frac{2\theta}{2-\theta}}|D^2h|^\frac{2(1-\theta)}{2-\theta}\,dt\right]^{\frac{2-\theta}{2}}\\
 &\leq \frac{\|\nabla h\|_{L^2(\Omega)}}{\omega_n^{1/2}s^{n/2}}+\left[\int_0^s t^2|D^2h|^2\,dt\right]^{\frac{\theta}{2}}\|D^2h\|_{L^\infty(\Omega)}^{1-\theta}\left(\frac{2-\theta}{2-3\theta}\right)^\frac{2-\theta}{2}s^{\frac{2-3\theta}{2}}
\end{align*}
holds for any small $s>0$, $0\leq\theta<2/3$, $\omega_n:=|\mathbb{B}|$ and $D^2h:=D^2h(x-t\nu)$. 
Since $\|D^2h\|_{L^\infty(\Omega)}$ is uniformly bounded by Lemma \ref{lemma-schauder}, integrating over $\partial\Omega$, we have
\begin{equation}
\label{l1_middle_estimate}
 \int_{\partial\Omega}\left|\nabla h\right|\,dS
 \leq C\left(\frac{\|\nabla h\|_{L^2(\Omega)}}{s^{n/2}}+\left[\int_\Omega d_{\partial\Omega}(x)^2|D^2h|^2\,dx\right]^{\frac{\theta}{2}}s^{\frac{2-3\theta}{2}}\right). 
\end{equation}
In order to optimize the inequality, we choose
\begin{equation*}
 s=s_\ast:=\left(\frac{n\|\nabla h\|_{L^2(\Omega)}}{(2-3\theta)\left[\int_\Omega d_{\partial\Omega}(x)^2|D^2h|^2\,dx\right]^{\theta/2}}\right)^{\frac{2}{n+2-3\theta}}, 
\end{equation*}
which together with Lemma \ref{lemma-harmonic_inequality} results in the desired $L^1$-estimate
\begin{align*}
 \int_{\partial\Omega}\left|\nabla h\right|\,dS
 &\leq C\|\nabla h\|_{L^2(\Omega)}^\frac{2-3\theta}{n+2-3\theta}\left[\int_\Omega d_{\partial\Omega}(x)^2|D^2h|^2\,dx\right]^{\frac{n\theta}{2(n+2-3\theta)}}\\
 &\leq C\left[\int_\Omega d_{\partial\Omega}(x)^2|D^2h|^2\,dx\right]^{\frac{n\theta+2-3\theta}{2(n+2-3\theta)}}, 
\end{align*}
where the exponent approaches $1/3$ as $\theta\to 2/3$. 
Note that $s_\ast>0$ is indeed admissible, since Lemma \ref{lemma-harmonic_inequality} yields
\begin{equation*}
 s_\ast\leq C\left[\int_\Omega d_{\partial\Omega}(x)^2|D^2h|^2\,dx\right]^\frac{1-\theta}{n+2-3\theta}, 
\end{equation*}
and the right hand side can be arbitrarily small for fixed $0\leq\theta<2/3$ if $\varepsilon_0>0$ is chosen small enough by Lemma \ref{lemma-perturbation}. 

For general $1\leq p\leq \infty$, we observe that the estimate \eqref{l1_middle_estimate} with the left hand side replaced by $\|\nabla h\|_{L^p(\partial\Omega)}$ holds with the additional requirement $\theta\leq 2/p$. 
Thus \eqref{nonlinear_trace_inequality} holds for $1\leq p\leq 3$. 
For $3<p\leq\infty$, we choose $\theta=2/p$ to obtain
\begin{equation*}
 \left\|\nabla h\right\|_{L^p(\partial\Omega)}\leq C\left[\int_\Omega d_{\partial\Omega}(x)^2|D^2h|^2\,dx\right]^{\frac{n+p-3}{np+2p-6}}, 
\end{equation*}
where the exponent is understood to be $1/(n+2)$ for $p=\infty$. 
We can improve the estimate more when $3<p<\infty$ by interpolating the special case of this inequality for $p=\infty$ and \eqref{nonlinear_trace_inequality} for $p=3$ and $\beta<1/3$ as
\begin{align*}
 \left\|\nabla h\right\|_{L^p(\partial\Omega)}
 &\leq \left\|\nabla h\right\|_{L^\infty(\partial\Omega)}^{1-\frac{3}{p}}\left\|\nabla h\right\|_{L^3(\partial\Omega)}^{\frac{3}{p}}\\
 &\leq C\left[\int_\Omega d_{\partial\Omega}(x)^2|D^2h|^2\,dx\right]^{\frac{p-3+3(n+2)\beta}{(n+2)p}}, 
\end{align*}
where the exponent approaches $(n+p-1)/(n+2)p$ as $\beta\to 1/3$. 
\end{proof}

We now combine all the ingredients to prove Theorem \ref{theorem-stability}. 

\begin{proof}[Proof of Theorem \ref{theorem-stability}]
Let $C>0$ denote a generic constant independent of $\Omega$. 
By Proposition \ref{proposition-distance}, the integral identity \eqref{harmonic_identity} and Lemma \ref{lemma-nonlinear_trace_inequality}, 
\begin{align*}
 \left\|\nabla h\right\|_{L^p(\partial\Omega)}
 &\leq C\left[\int_{\Omega}d_{\partial\Omega}(x)^2\left|D^2h\right|^2\,dx\right]^\beta\\
 &\leq C\left[\int_{\partial\Omega}\left|\Delta u-c\right|\left|\frac{\partial h}{\partial\nu}\right|\,dS\right]^\beta\\
 &\leq C\left\|\Delta u-c\right\|_{L^{p'}(\partial\Omega)}^\beta\left\|\nabla h\right\|_{L^p(\partial\Omega)}^\beta, 
\end{align*}
where $1\leq p,p'\leq\infty$ are chosen to satisfy $(1/p)+(1/p')=1$. 
Hence, 
\begin{equation*}
 \left\|\nabla h\right\|_{L^p(\partial\Omega)}\leq C\left\|\Delta u-c\right\|_{L^{p'}(\partial\Omega)}^\frac{\beta}{1-\beta}. 
\end{equation*}
Combining the above estimate with \eqref{harmonic_identity}, \eqref{distance_to_oscillation} and \eqref{sequence_inequalities}, we obtain
\begin{align*}
 \rho_2-\rho_1&\leq C\left(\left[\int_{\partial\Omega}\left|\Delta u-c\right|\left|\frac{\partial h}{\partial\nu}\right|\,dS\right]^\frac{2^\ast}{2(n+2^\ast)}+\left\|\Delta u-c\right\|_{L^\infty(\partial\Omega)}\right)\\
 &\leq C\left(\left[\left\|\Delta u-c\right\|_{L^{p'}(\partial\Omega)}\left\|\nabla h\right\|_{L^p(\partial\Omega)}\right]^\frac1n+\left\|\Delta u-c\right\|_{L^\infty(\partial\Omega)}\right)\\
 &\leq C\left(\left\|\Delta u-c\right\|_{L^{p'}(\partial\Omega)}^\frac{1}{n(1-\beta)}+\left\|\Delta u-c\right\|_{L^\infty(\partial\Omega)}\right)
\end{align*}
for $n\geq 3$. 
If $n=2$, the same estimate follows with the exponent $1/n$ replaced by any positive number smaller than $1/2$. 
By exchanging the roles of $p$ and $p'$, we conclude the proof. 
\end{proof}

\bigskip

\noindent
{\bf Acknowledgments.}
The second author was supported in part by the Grant-in-Aid for Scientific Research (C) 20K03673, Japan Society for the Promotion of Science. 


\end{document}